%
%
%
%

\documentclass[runningheads,a4paper]{llncs}

\usepackage{amssymb}
\setcounter{tocdepth}{3}
\usepackage{graphicx}
\usepackage{color}
\usepackage{url}
\urldef{\mailsa}\path|buesing@or.rwth-aachen.de|
\urldef{\mailsb}\path|d.andreagiovanni@zib.de|
\newcommand{\keywords}[1]{\par\addvspace\baselineskip
\noindent\keywordname\enspace\ignorespaces#1}


\newcommand{\SMB}{\mathcal{S}_M}

\usepackage{algorithm2e}
\usepackage{float}
\floatstyle{ruled}
\newfloat{algo}{tbp}{loa}[chapter]
\providecommand{\algoname}{Algorithm}
\floatname{algo}{\protect\algoname}
\newenvironment{lyxlist}[1]
{\begin{list}{}
{\settowidth{\labelwidth}{#1}
 \setlength{\leftmargin}{\labelwidth}
 \addtolength{\leftmargin}{\labelsep}
 }}
{\end{list}}


\begin{document}

\mainmatter  

\title{Robust Optimization
\\[2pt]
under Multi-band Uncertainty
\\[6pt]
Part I: Theory
\thanks{This work was partially supported by the \emph{German Federal Ministry of Education and Research} (BMBF), project \emph{ROBUKOM} \cite{BlDAHa11,KoEtAl12}, grant 03MS616E, and by the DFG Research Center \textsc{Matheon} - \textit{Mathematics for key technologies}, project B23  ``\emph{Robust optimization for network applications}''.
}
}

\titlerunning{Robust Optimization under Multi-band Uncertainty - Part I: Theory}

%
%
\author{Christina B\"using\inst{1} \and Fabio D'Andreagiovanni\inst{2,3}
}
\authorrunning{C. B\"using and F. D'Andreagiovanni}

\institute{Chair of Operations Research, RWTH Aachen University\\
Kackertstrasse 7, 52072 Aachen, Germany\\
\mailsa\\
\
\\
\and
Department of Optimization, Zuse-Institut Berlin (ZIB)\\
Takustrasse 7, 14195 Berlin, Germany\\
\and
Department of Computer, Control and Management Engineering\\
Sapienza Universit\`{a} di Roma, via Ariosto 25, 00185 Roma, Italy\\
\mailsb
}

%
%

\toctitle{Lecture Notes in Computer Science}
\tocauthor{Authors' Instructions}
\maketitle

\begin{abstract}
The classical single-band uncertainty model introduced by Bertsimas and Sim \cite{BeSi04} has represented a breakthrough in the development of tractable robust counterparts of Linear Programs. However, adopting a single deviation band may be too limitative in practice: in many real-world problems, observed deviations indeed present asymmetric distributions over asymmetric ranges, so that getting a higher modeling resolution by partitioning the band into multiple sub-bands is advisable.
The critical aim of our work is to close the knowledge
gap on the adoption of multi-band uncertainty in Robust Optimization:
a general definition and intensive theoretical study of a multi-band
model are actually still missing. Our new developments have been
also strongly inspired and encouraged by our industrial partners, interested in getting a better modeling of arbitrary shaped distributions, built on historical data about the uncertainty affecting the considered real-world problems.

\keywords{Robust Optimization, Multi-band Uncertainty, Compact Robust Counterpart, Cutting Planes, Probabilistic Bound.}
\end{abstract}


\section{Introduction} \label{sec:intro}

A fundamental assumption in classical optimization is that all data are exact.
However, many real-world problems involve data that are uncertain or not known
with precision. This can be due, among other things, to erroneous measurements or adoptions of approximated
numerical representations. It is well-known that neglecting such uncertainty may have very bad effects: solutions that are feasible may reveal to be infeasible, as a result of small deviations from nominal data, while solutions considered optimal may turn out to be of very low quality.  Such risks cannot be taken in critical applications, such as contaminant detection in water distribution networks \cite{WaHaMu06} or the design of power grids \cite{JiEtAl12}.
Since the groundbreaking investigations by Dantzig \cite{Da55}, many works have thus tried to find effective ways to deal with uncertainty (see \cite{BeBrCa11} for an overview).

In recent years, Robust Optimization (RO) has become a valid methodology to tackle uncertainty in optimization and has increasingly attracted the attention of practitioners, that have started many research partnerships on the topic with academic institutions (e.g., on optical network design \cite{BeKoNo11}, telecommunications network design \cite{KoEtAl12}, surgery scheduling \cite{MaNiNo12}).
The central feature of RO is to approach coefficient uncertainties through hard constraints, that restrict the feasible set maintaining only \emph{robust} solutions, i.e. solutions protected against deviations. For an exhaustive introduction to theory and applications of RO, we refer the reader to the book by Ben-Tal et al. \cite{BeElNe09} and to the recent survey by Bertsimas et al. \cite{BeBrCa11}.

One of the most successful and widely-adopted RO approach is the so-called $\Gamma$-scenario set (BS), introduced by Bertsimas and Sim in \cite{BeSi04}. The uncertainty model of BS
assumes that each uncertain coefficient $a$ is a symmetric and bounded random variable, taking value in the symmetric interval $[\bar{a}-d,\bar{a}+d]$, where $\bar{a}$ is the 
\emph{nominal value} of $a$ and $d$ is the maximum deviation from $\bar{a}$.
Furthermore, the uncertainty set adopts a parameter $\Gamma > 0$  to represent
the maximum number of coefficients that are allowed to deviate from their nominal value.
%
These modeling assumptions lead to a compact RO model that preserves the features of the original nominal model.
However, the use of a single deviation band may greatly limit the power of modeling
uncertainty, as noted, for example, even by Sim and colleagues in \cite{ChSiSu07}. This is particularly evident in real-world problems, where it is common to have \emph{asymmetric probability distributions} of the deviations, that are additionally defined over \emph{non-symmetric} intervals.
In such cases, neglecting the inner-band behavior and
just considering the extreme values like in BS may lead to a rough estimate of the
deviations and thus to unrealistic uncertainty set, which either overestimate or
underestimate the overall deviation. Having a higher modeling resolution would
therefore be very desirable. This can be accomplished by breaking the single
band into multiple and narrower bands, each with its own $\Gamma$ value.
This observation was first captured by Bienstock and taken into account
to develop a RO framework for the special case of  Portfolio Optimization \cite{Bi07}. It was then
extended to Wireless Network Design \cite{BiDA09,DA11}. Yet, no definition and intensive theoretical
study of a more general multi-band model applicable in other contexts
have been accomplished. The main goal of this paper is to close such a gap.

We remark that investigating the adoption of a multi-band uncertainty set
and studying the theoretical properties of the resulting model have been discussed
with and strongly encouraged by our industrial partners, such as \emph{British
Telecom Italia} (BT) and \emph{Nokia Siemens Networks} (NSN) \cite{NSN12}, in past and present
collaborations about real network design \cite{BiDA09,BlDAHa11,DA11,KoEtAl12}. NSN, in particular, has been  interested in finding refined models for taking into account the
arbitrary and non-symmetrical distributions of traffic uncertainty characterizing
nation-wide optical networks \cite{Sc11}. Our modeling and theoretical developments
are thus also strongly based on practical industrial needs. A better modeling of
the traffic uncertainty affecting telecommunication networks was one
of the essential goals of our research activities in the German industrial research
project ROBUKOM \cite{BlDAHa11,KoEtAl12}. This project aimed at developing new models and algorithms for
the design of robust and survivable networks, in collaboration with NSN
 and the \emph{German National Research and Education Network} (DFN) \cite{DFN12}.
%
%
%
%

\bigskip

\noindent
\textbf{Outline of our contributions.}
In this work, we study the robust counterpart of a Mixed-Integer Linear Program (MILP) with uncertain coefficient matrix, when a \emph{multi-band uncertainty set} is considered. As previously noted, our main aim is to fill the knowledge gap on multi-band uncertainty in RO by presenting new theoretical results on this topic.
Specifically, we:
\begin{enumerate}
\item introduce a general definition of multi-band uncertainty (Section \ref{subsec:model});
\\
\item characterize a family of dominating uncertainty scenarios (Section \ref{subsec:model});
\\
\item derive a compact formulation for the robust counterpart of a MILP, whose size is smaller than that presented by us in \cite{BuDA12a,BuDA12b} (due to the new dominance results) (Section \ref{sec:compactMB});
\\
\item define an efficient method for the separation of robustness cuts (i.e., cuts that
    impose robustness), based on solving a min-cost flow instance. In particular, we propose an alternative proof to the one that we presented in \cite{BuDA12a}: the new proof takes advantage of the new dominance results and highlights the correspondence between the dominating uncertainty scenarios and the integral flows of an auxiliary min-cost flow instance (Section \ref{sec:rocuts});
\\
  \item study the properties of a special subfamily of uncertain MILPs, namely pure Binary Programs with uncertainty just affecting the objective function, and  we show that if the nominal problem is polynomially solvable, then also its counterpart remains polynomially solvable (Section \ref{sec:01_Opt});
  \\
  \item derive a \emph{data-driven} bound on the probability that an optimal robust solution is infeasible, due to deviations that are not considered by the uncertainty set. We define the bound \emph{data-driven}, as it exploits samples of the unknown arbitrary distribution of an uncertain coefficient (Section \ref{sec:prob_bound});
\end{enumerate}

\noindent
We remark that our original results are not obtained by simply extending the proofs of \cite{BeSi04} for single-band uncertainty, but need alternative proof strategies. For example, the separation of cuts imposing robustness does not anymore reduce to a trivial ordering problem (see \cite{FiMo12}), but, as we have proved, corresponds to solve a min-cost flow problem (see Section \ref{sec:rocuts} for details).

Moreover, our bound on the probability of robust infeasibility differs from those proposed in the two cornerstone RO papers \cite{BeNe00} and \cite{BeSi04}: in contrast to the bounds of these two works, that use a priori distributional information (symmetric distribution over symmetric single range),
we derive a \emph{data-driven bound} by exploiting samples of the unknown distribution. This type of bound is particularly suitable in real-world problems, where  historical data about the deviations are commonly available. Moreover, we think that a data-driven bound is also more in accordance with the spirit of multi-band uncertainty: multi-band is indeed particularly suitable to build strongly data-driven uncertainty sets, that can effectively approximate the shape of histograms built on historical data \cite{Bi07}.

Finally, we note that we first presented the results about the compact robust counterpart and the efficient separation of robustness cuts in \cite{BuDA12a}, where we focused attention on Linear Programs. A revised version of the work with the alternative proof for the separation of robustness cut is presented in \cite{BuDA12b}.
Additionally, we remark that in this paper we have decided to focus just on the theoretical results. However, we have already got promising computational results on realistic instances of real-world problems
(specifically, network design problems),
where the uncertainty set has been defined in collaboration with our industrial partners. We refer the reader to \cite{BuDA12a} and \cite{BuEtAl12} for a description of these results.


\bigskip

\noindent
\textbf{Review of related literature and comparisons with our work.}
While writing this paper, we became aware that a model similar to ours was presented in a very recent technical report (\cite{Ma12}). However, we remark that: 1) in our former papers \cite{BuDA12a}, \cite{BuDA12b} and \cite{BuDA12c}, we already introduced the multi-band model and formalized theoretical results similar to those
presented
in \cite{Ma12}; 2) the uncertainty model of \cite{Ma12} is a special case of our multi-band model, so our results are more general.
Moreover, in contrast to our model, the model of \cite{Ma12} cannot shape any arbitrary distribution, because of its limiting assumptions (symmetric deviation range, symmetric distribution of deviations, probability of deviation that decreases as the value of the deviation increases). Such hypotheses, in particular those on symmetry, greatly reduce the ability of modeling real uncertainty distributions and can result in excessive conservatism, as clearly pointed out even by Sim and colleagues in \cite{ChSiSu07}.


Reducing the conservatism of robust solutions associated with first classic works like \cite{BeNe00} and \cite{BeSi04} has been an important question approached in several works. We recall here some more significant references, referring the reader to \cite{BeElNe09,BeBrCa11} for a broader view. In \cite{ChSiSu07}, the conservatism is mitigated by defining new deviations measures, aimed at capturing asymmetrical distributions. In \cite{BeBeBr10}, a ``soft robust'' model is introduced by using the concept of risk measure, reducing the conservatism at the price of lower probabilistic guarantees of robustness.
The concept of risk measure is also used in \cite{BeBr09} to develop rules for constructing good uncertainty sets that reduce conservatism. Finally, another interesting approach based on the BS uncertainty model is constituted by ``light robustness'' \cite{FiMo09}: this approach adopts a hard upper bound on the objective value to control conservatism and aims at finding the most robust solution that satisfies the bound.

All these works successfully reach the objective of guaranteeing a reduction in conservatism, proposing new interesting approaches. However, at the same time, we think that they miss the elegant simplicity and easiness of access that characterize BS
and that have been undoubtedly decisive for its success also among practitioners and scholars from outside the Operations Research community. Concerning this point, we note that our work  differs from the previously cited ones: we defined and studied a general multi-band uncertainty model that overcomes remarkable limitations of BS,
while trying to maintain the simplicity of BS and offering straightforward ways to build uncertainty sets strongly based on available historical data. We remark that the simplicity of access and the data-driven characteristics have been highly requested by our industrial partners (e.g., Nokia Siemens Networks) in several industrial collaborations. Our modeling and theoretical developments are thus also inspired by and satisfy practical industrial needs \cite{KoEtAl12}.



\subsection{Model and Notation.} \label{subsec:model}

In this work, we study the robust counterpart of Mixed-Integer Linear Programming
Problems whose coefficient matrix is subject to uncertainty and the
uncertainty set is modeled through multiple deviation bands. We start by considering a
generic deterministic problem of the form:
\begin{eqnarray*}
\max &  & \sum_{j\in J}c_{j}\hspace{0.1cm}x_{j}\hspace{3.4cm}\mbox{(MILP)}\\
 &  & \sum_{j\in J}a_{ij}\hspace{0.1cm}x_{j}\leq b_{i}\hspace{1cm}i\in I\\
 &  & x_{j}\geq0\hspace{2.2cm}j\in J
 \\
 &  & x_{j} \in \mathbb{Z}_{+} \hspace{1.9cm} j \in J_{\mathbb{Z}} \subseteq J
\end{eqnarray*}

\noindent
where $I=\{1,\ldots,m\}$ and $J=\{1,\ldots,n\}$ denote the set
of constraint and variable indices, respectively. Assuming that the uncertainty affects only the coefficient matrix does not limit the generality of our study: indeed, 1) if the r.h.s. vector $b$ is uncertain, we introduce an additional variable $x_{n+1}: x_{n+1} = 1$ and include $b$ as a column of the matrix $A$; 2) if the cost vector $c$ is uncertain, we introduce an additional variable $L$, add the constraint $c'x \geq L$ and change the objective function into $\max L$.

We assume that the value of each coefficient $a_{ij}$ is uncertain and is equal to the summation of a \emph{nominal value} $\overline{a}_{ij}$ and a \emph{deviation} lying in the range $[d_{ij}^{K^{-}},d_{ij}^{K^{+}}]$, where $d_{ij}^{K^{-}},d_{ij}^{K^{+}} \in \mathbb{R}$ represent the maximum negative and positive deviations from $\overline{a}_{ij}$, respectively.
The \emph{actual value} $a_{ij}$ thus lies in the interval
$[\bar{a}_{ij}+d_{ij}^{K^{-}}, \hspace{0.1cm} \overline{a}_{ij}+d_{ij}^{K^{+}}]$.

We derive a generalization of the Bertsimas-Sim uncertainty model
by partitioning the single deviation band $[d_{ij}^{K-},d_{ij}^{K+}]$ of each coefficient $a_{ij}$ into $K$ bands, defined on the basis of $K$ deviation values:
$$
-\infty<
{d_{ij}^{K^{-}}<\cdots<d_{ij}^{-2}<d_{ij}^{-1}
\hspace{0.1cm}<\hspace{0.2cm}d_{ij}^{0}=0\hspace{0.2cm}<\hspace{0.1cm}
d_{ij}^{1}<d_{ij}^{2}<\cdots<d_{ij}^{K^{+}}}
<+\infty .
$$
Through these values, we define:
1) a set
of positive deviation bands, such that each band $k\in \{1,\ldots,K^{+}\}$
corresponds to the range $(d_{ij}^{k-1},d_{ij}^{k}]$;
2) a set of negative deviation bands, such that each band $k \in \{K^{-}+1,\ldots,-1,0\}$
corresponds to the range $(d_{ij}^{k-1},d_{ij}^{k}]$ and band $k = K^{-}$
corresponds to the single value $d_{ij}^{K^{-}}$
(the interval of each band but $k =K^{-}$ is thus open on the left).
With a slight abuse of notation, in what follows we indicate
a generic deviation band through the index $k$, with  $k \in K = \{K^{-},\ldots,-1,0,1,\ldots,K^{+}\}$
and the corresponding range by $(d_{ij}^{k-1}, d_{ij}^{k}]$.

Additionally, for each band $k\in K$, we define a lower bound $l_{k}$ and
an upper bound $u_{k}$ on the number of deviations that may fall in $k$, with $l_k, u_{k} \in \mathbb{Z}$ satisfying $0 \leq l_k \leq u_{k} \leq n$. In the case of band $k = 0$,
we assume that $u_{0}=n$, i.e. we
do not limit the number of coefficients that take
their nominal value. Furthermore, we assume that $\sum_{k \in K} l_k \leq n$, so that there always exists a feasible realization of the coefficient matrix.

We remark that, in order to avoid an overload of the notation, we assume that the number of bands $K$ and the bounds $l_k, u_k$ are the same for each constraint $i \in I$. Anyway, it is straightforward to modify all presented results to consider different values of those parameters for each constraint.

All the elements introduced above define a \emph{multi-band scenario set} $\SMB$. A \emph{scenario} $S$ specifies the deviation $d_{ij}^{S}$ of each coefficient $a_{ij}$ of the problem (thus,  $a_{ij} = \overline{a}_{ij} + d_{ij}^{S}$). We say that a scenario $S$ is \emph{feasible} and belongs to the set $\SMB$ if and only if the deviations $d^S_{ij}$ satisfy the following three properties:
\begin{eqnarray}
 \label{def:scenario_prop1}
  &&
  d^S_{ij}\in[d_{ij}^{K^{-}},d_{ij}^{K^{+}}]
  \hspace{4.6cm}
  i \in I, j \in J,
  \\
  \label{def:scenario_prop2}
  &&
  l_{k}
  \hspace{0.1cm} \leq \hspace{0.1cm}
  \left|
    \{j\in J: d^S_{ij} \in (d_{ij}^{k-1}, d_{ij}^{k}] \}
  \right|
  \hspace{0.1cm} \leq \hspace{0.1cm}
  u_{k}
  \hspace{1.0cm}
  i \in I, k \in K \backslash \{K^{-}\} ,
  \\
  \label{def:scenario_prop3}
  &&
  l_{K^{-}}
  \hspace{0.1cm} \leq \hspace{0.1cm}
  \left|
    \{j\in J: d_{ij}^{S}=d_{ij}^{K-}\}
  \right|
  \hspace{0.1cm} \leq \hspace{0.1cm}
  u_{K^{-}}
  \hspace{1.3cm}
  i \in I .
\end{eqnarray}

\noindent
Property (\ref{def:scenario_prop1}) enforces that the deviation of each coefficient lies in the corresponding overall deviation range. Properties (\ref{def:scenario_prop2})-(\ref{def:scenario_prop3}) guarantee that, for each constraint and deviation band, the number of deviations falling in the band must respect the corresponding bounds.

%
%
Before deriving a compact formulation for the robust counterpart of (MILP), we introduce
some structural properties of the multi-band scenario set $\SMB$.
To this end, we first define the \emph{band partition} of a scenario $S\in \mathcal{S}_M$ as the family of subsets:
\begin{eqnarray}
\label{band_partition}
J_{ik}(S)=\{j\in J: d^S_{ij}\in (d_{ij}^{k-1}, d_{ij}^{k}]\}, \qquad i\in I, k\in K \; .
\end{eqnarray}

\noindent
Each subset $J_{ik}(S)$ includes the indices $j \in J$ of constraint $i \in I$ whose deviation falls in band $k$.
It is easy to verify that such subsets define a partition of $J$ for each constraint $i$ (i.e., $\bigcup_{k\in K}J_{ik}(S)=J$,
$\forall \hspace{0.05cm} i\in I$, and $J_{ik}(S)\cap J_{ik'}(S)=\emptyset$, $\forall \hspace{0.05cm}
i\in I$, $k,k'\in K:$ $k\neq k'$). Furthermore, we call the \emph{profile
of a scenario} $S \in \mathcal{S}_M$  the numbers $p$ and $\theta_{k}$, $k \in K$ defined as follows:
\begin{eqnarray}
\label{profile_scenario_1}
p &=& \min\{k\in K: \sum_{i=K^{-}}^{k}l_{i}+\sum_{i=k+1}^{K^{+}}u_{i}\leq n\}
\\
&&
\nonumber
\\
\label{profile_scenario_2}
\theta_{k} &=&
\left\{
    \begin{array}{lll}
        l_{k}
        \hspace{2.9cm} k \leq p-1
        \\
        u_{k}
        \hspace{2.8cm} k \geq p+1
        \\
        n - \sum_{k\in K\backslash\{p\}}\theta_{k}
        \hspace{0.55cm} k=p \; .
    \end{array}
\right.
\end{eqnarray}

\medskip

\noindent
Note that $p\ge0$ since $u_{0}=n$ and $\sum_{k\in K}\theta_k =n$.

Finally, we define a \emph{dominance relation} among scenarios: let $S, S' \in \SMB$ be two feasible scenarios. Then $S$ \emph{dominates} $S'$ if $d^S_{ij} \geq d^{S'}_{ij}$
for all $i\in I$, $j\in J$. Obviously, a solution $x$ that is feasible under $S$
(i.e., $\sum_{j\in J} (\bar{a}_{ij} + d^S_{ij}) \hspace{0.1cm} x_{j} \leq b_{i}$, $\forall i\in I$)
remains feasible under $S'$. In the following lemma, we
prove that we can define the robust counterpart of (MILP) by limiting our attention to those
scenarios that satisfy the following properties: 1) each deviation attains the maximum value of the band in which it falls; 2) the cardinality of each subset of the band partition (\ref{band_partition}) equals the value associated with the profile (\ref{profile_scenario_2}) of the considered feasible scenario.

\begin{lemma}\label{lem:profile_satisfying}
Let $S\in \mathcal{S}_M$ be a feasible scenario.
\begin{enumerate}
\item If $d^S_{ij}\neq d_{ij}^{k}$ for any $j\in J_{ik}(S)$, $i\in I$,
$k\in K$, then there exists a feasible scenario $S'\in \mathcal{S}_M$ dominating $S$.
\item If $|J_{ik}(S)|\neq\theta_{k}$ for any $i\in I$, $k\in K$, then
there exists a feasible scenario $S'\in \mathcal{S}_M$ dominating $S$.
\end{enumerate}
\end{lemma}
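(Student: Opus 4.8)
\medskip

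\noindent\textbf{Overall approach.}
The plan is to prove both statements constructively: given a feasible scenario $S$ that violates the stated property, I exhibit an explicit feasible scenario $S'$ with $d^{S'}_{ij}\ge d^S_{ij}$ everywhere. Part (1) is a pointwise ``push-up-to-the-band-ceiling'' argument that does not touch the band partition, while part (2) is a rebalancing argument that moves indices between bands until every band reaches its profile value $\theta_k$. I would treat part (1) first, since part (2) can then be applied to the scenario produced by part (1) (though, as stated, each part stands on its own).

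\medskip

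\noindent\textbf{Part (1).}
Suppose $d^S_{ij}\ne d^k_{ij}$ for some $j\in J_{ik}(S)$. Define $S'$ by setting $d^{S'}_{ij}=d^k_{ij}$ whenever $j\in J_{ik}(S)$ with $k\ge 1$, $d^{S'}_{ij}=d^k_{ij}$ whenever $j\in J_{ik}(S)$ with $k\le 0$ (so nominal or negative deviations are raised to the top of their own band, which for $k=0$ means $d^{S'}_{ij}=d^0_{ij}=0$ and for $k=K^-$ means leaving the single value $d^{K^-}_{ij}$ unchanged), i.e. in all cases $d^{S'}_{ij}:=d^k_{ij}$ for the unique $k$ with $j\in J_{ik}(S)$. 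First I check $d^{S'}_{ij}\ge d^S_{ij}$: since $j\in J_{ik}(S)$ means $d^S_{ij}\in(d^{k-1}_{ij},d^k_{ij}]$, we have $d^S_{ij}\le d^k_{ij}=d^{S'}_{ij}$, so $S'$ dominates $S$; it is strict in at least one coordinate by hypothesis. Next I check feasibility of $S'$: property (\ref{def:scenario_prop1}) holds because $d^k_{ij}\in[d^{K^-}_{ij},d^{K^+}_{ij}]$ for every $k$; and crucially $J_{ik}(S')=J_{ik}(S)$ for all $i,k$ — raising $d^S_{ij}$ from $(d^{k-1}_{ij},d^k_{ij}]$ to exactly $d^k_{ij}$ keeps it in the same half-open band — so the cardinality bounds (\ref{def:scenario_prop2})–(\ref{def:scenario_prop3}) are inherited verbatim from $S$. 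This part is routine; the only subtlety is checking that the band endpoints land in the right (half-open) band, which is immediate from the definitions.

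\medskip

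\noindent\textbf{Part (2) --- the main obstacle.}
This is the substantive part. Suppose $|J_{ik}(S)|\ne\theta_k$ for some band $k$. By (\ref{def:scenario_prop2})–(\ref{def:scenario_prop3}) we have $l_k\le|J_{ik}(S)|\le u_k$, and by (\ref{profile_scenario_2}) $\theta_k$ also lies in $[l_k,u_k]$ with $\sum_k\theta_k=n=\sum_k|J_{ik}(S)|$; so the "excesses" $|J_{ik}(S)|-\theta_k$ sum to zero over $K$ (for each fixed $i$). The idea is a transportation-style exchange argument, performed independently for each constraint $i$: as long as some band $k$ has $|J_{ik}(S)|>\theta_k$, there must be another band $k'$ with $|J_{ik'}(S)|<\theta_k$; I want to move one index $j$ from $J_{ik}(S)$ to $J_{ik'}(S)$ (i.e. reset $d^S_{ij}$ from $d^k_{ij}$ to $d^{k'}_{ij}$, assuming we start from a profile-1-satisfying scenario after applying Part (1)) while keeping the scenario dominating $S$. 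The monotone structure of the profile is exactly what makes the domination survive: by (\ref{profile_scenario_1})–(\ref{profile_scenario_2}), $\theta_k=l_k$ for $k<p$ and $\theta_k=u_k$ for $k>p$, so an overfull band $k$ (with $|J_{ik}(S)|>\theta_k$) forces $k<p$ or ($k=p$ with $|J_{ik}(S)|>\theta_p$), while an underfull band $k'$ forces $k'>p$ or ($k'=p$); in every case one can choose the overfull band strictly below the underfull band, so moving $j$ from $k$ up to $k'$ replaces $d^k_{ij}$ by $d^{k'}_{ij}>d^k_{ij}\ge d^S_{ij}$ — the deviation only increases. Each such move strictly decreases $\sum_k\big|\,|J_{ik}(S)|-\theta_k\,\big|$, so after finitely many moves we reach a scenario $S'$ with $|J_{ik}(S')|=\theta_k$ for all $k$; feasibility is maintained throughout because each intermediate cardinality stays between the previous value and $\theta_k$, hence within $[l_k,u_k]$. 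The crux — and the step I expect to need the most care in writing — is justifying that one can always pair an overfull band with a strictly-higher underfull band; this is precisely where the definition of $p$ as the threshold index in (\ref{profile_scenario_1}) does the work, and I would prove it by a short case analysis on the position of the overfull/underfull bands relative to $p$, using $\sum_k\theta_k=n$.
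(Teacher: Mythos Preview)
Your proposal is correct and follows essentially the same construction as the paper. The paper's proof is far terser: for Part~(2) it simply takes the highest underfull band $k^1$, asserts (without spelling out the profile argument you give) that some overfull band $k^2<k^1$ exists, performs a \emph{single} swap of one index from $k^2$ up to $k^1$, and stops---this already yields a feasible $S'$ dominating $S$, which is all the lemma claims. Your iterated version, together with the explicit justification that overfull bands lie at indices $\le p$ and underfull bands at indices $\ge p$, proves the stronger fact that one can reach a profile-matching dominating scenario; this is exactly what the paper \emph{uses} afterwards (restricting to ``profile and bound valid'' scenarios), so your extra care is not wasted.
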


\begin{proof}
1) Since $x \geq 0$ and the values falling in a band are bounded from
above by $d_{ij}^{k}$, a scenario $S'$ such that $d^{S'}_{ij}=d_{ij}^{k}$
for each $j\in J_{ik}(S)$, $i\in I$, $k\in K$ dominates the scenario~$S$.

2) Let $k^1$ be the highest band $k \in K$ such that $|J_{ik}(S)|<\theta_{k}$. Then
there exists a band $k^2<k^1$ such that $|J_{ik^2}(S)|>\theta_{k^2}$. Let
$\mu \in J_{ik^2}$. We define the feasible scenario $S'$ by setting
$d^{S'}_{i\mu}=d_{i\mu}^{k^1}$ and $d^{S'}_{ij}=d^{S}_{ij}$ for $j \neq \mu$. Then $S'$
dominates $S$. \qed
\end{proof}

\noindent
We call a scenario $S\in \SMB$ that satisfies the properties of Lemma~\ref{lem:profile_satisfying} \emph{profile and bound valid}. Due to Lemma~\ref{lem:profile_satisfying}, in the rest of the paper we can limit our attention to these scenarios.
We now proceed to study the robust counterpart of (MILP) under multi-band uncertainty.

%

%
%

\section{A Compact Robust MILP Counterpart} \label{sec:compactMB}

The robust counterpart of the problem (MILP) under a multi-band scenario set $\SMB$
can be equivalently written as:
\begin{eqnarray*}
\max && \sum_{j\in J}c_{j}\hspace{0.1cm}x_{j}
\\
 && \sum_{j\in J} \overline{a}_{ij} \hspace{0.1cm} x_{j}
 + DEV_i(x,\SMB) \leq b_{i}
 \hspace{1.4cm} i\in I
 \\
 &  & x_{j} \geq 0 \hspace{5.05cm}j\in J
 \\
 &  & x_{j} \in \mathbb{Z}_{+} \hspace{4.8cm} j \in J_{\mathbb{Z}} \subseteq J ,
\end{eqnarray*}

\noindent
where DEV$_i(x,\SMB)$ is the maximum total deviation induced by the scenario set $\SMB$  for a feasible solution $x$ when constraint $i$ is considered.
Due to Lemma~\ref{lem:profile_satisfying}, DEV$_i(x,\SMB)$ is equal to the optimal value of the following pure 0-1 Linear Program (note that in this case index $i$ is fixed):
\begin{eqnarray} \label{DEV01}
DEV_i(x,\SMB) = &\max&
\sum_{j \in J} \sum_{k \in K} d^{k}_{ij} \hspace{0.1cm} x_{j} \hspace{0.1cm} y_{ij}^{k}
\hspace{1.5cm} \mbox{(DEV01)}
\nonumber
\\
&&
\sum_{j \in J} y_{ij}^{k}
\hspace{0.1cm} = \hspace{0.1cm}
\theta_k
\hspace{2.2cm}
k \in K
\label{DEV01_bounds}
\\
&&
\sum_{k \in K} y_{ij}^{k} \leq 1
\hspace{2.5cm}
j \in J
\label{DEV01_singleBand}
\\
&&
y_{ij}^{k} \in \{0,1\}
\hspace{2.5cm}
j \in J, k \in K.
\label{DEV01_bandVars}
\end{eqnarray}

\noindent
The generic binary variable (\ref{DEV01_bandVars}) is equal to 1 if coefficient
${a}_{ij}$ deviates in band $k$ and is 0 otherwise.
The constraints (\ref{DEV01_bounds}) impose the profile of  $\SMB$ on the number of coefficients deviating in each band $k$. The constraints (\ref{DEV01_singleBand}) ensure that each coefficient deviates in at most one band (actually these should be equality constraints, but, for the assumption $u_{0}=n$ made in Section \ref{subsec:model}, we can consider inequalities). An optimal solution of (DEV01) thus defines a distribution of the coefficients among the bands which maximizes the total deviation w.r.t.~the nominal values, while respecting the bounds on the number of deviations of each band.

\hspace{0.5cm}

\noindent
We now prove that the robust counterpart of (MILP) under a multi-band scenario set $\SMB$ can be reformulated as a \emph{compact} Mixed-Integer Linear Program. To this end, we first consider first the \emph{linear relaxation} (DEV01-RELAX) of (DEV01) and show that it presents the following nice property.
\begin{proposition} \label{th:integrality_DEV01}
The polytope described by the constraints of (DEV01-RELAX) is integral.
\end{proposition}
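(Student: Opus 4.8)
The plan is to recognize the constraint system of (DEV01-RELAX) as that of a balanced bipartite transportation polytope and then to invoke total unimodularity. Since the constraint index $i$ is fixed throughout, I write $y^{k}_{j}:=y^{k}_{ij}$ for $j\in J$, $k\in K$, and look at the matrix $M$ of the system made up of the band equalities $\sum_{j\in J}y^{k}_{j}=\theta_{k}$ ($k\in K$) and the coefficient inequalities $\sum_{k\in K}y^{k}_{j}\le 1$ ($j\in J$). Each column of $M$ is indexed by a pair $(j,k)$ and has exactly two nonzero entries, both equal to $1$: one in the row of the band equality for $k$ and one in the row of the coefficient inequality for $j$. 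Hence $M$ is the vertex--edge incidence matrix of the bipartite graph with node classes $K$ and $J$, a classical example of a totally unimodular matrix (Heller--Tompkins).

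First I would record that the right-hand side is integral: by (\ref{profile_scenario_2}) each $\theta_{k}$ equals $l_{k}$, $u_{k}$, or $n-\sum_{k'\neq p}\theta_{k'}$, all integers, and nonnegative (for $k=p$ this uses the definition (\ref{profile_scenario_1}) of $p$), while the right-hand sides of (\ref{DEV01_singleBand}) are all $1$. Total unimodularity is preserved under negating rows (to split each equality into two inequalities) and under appending $\pm I$ (for the nonnegativity constraints), so the full inequality description of the feasible region has a totally unimodular constraint matrix together with an integral right-hand side; by the Hoffman--Kruskal theorem the polytope is then integral.

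Alternatively --- and this is the cleanest form --- I would first note that the inequalities (\ref{DEV01_singleBand}) are automatically tight at every feasible point: summing (\ref{DEV01_bounds}) over $k$ gives $\sum_{j\in J}\sum_{k\in K}y^{k}_{j}=\sum_{k\in K}\theta_{k}=n=|J|$, and since each of the $|J|$ quantities $\sum_{k\in K}y^{k}_{j}$ is at most $1$, every one of them must equal $1$. Thus the feasible region of (DEV01-RELAX) coincides with the balanced transportation polytope $\{\,y\ge 0:\ \sum_{j}y^{k}_{j}=\theta_{k}\ \forall k,\ \sum_{k}y^{k}_{j}=1\ \forall j\,\}$ on the bipartite graph with node classes $K$ and $J$, with supplies $\theta_{k}\in\mathbb{Z}_{+}$, unit demands, and total supply $\sum_{k}\theta_{k}$ equal to the total demand $|J|$. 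Integrality of such transportation polytopes is standard (again via total unimodularity of the bipartite incidence matrix, or directly by a flow-rounding argument).

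I do not expect a real obstacle here. The only points needing care are (i) checking that every $\theta_{k}$ is an integer --- immediate from the definition of the profile --- and (ii) being precise about which matrix is claimed to be totally unimodular, i.e. making sure that turning the equalities (\ref{DEV01_bounds}) into pairs of inequalities and adjoining the nonnegativity constraints does not destroy total unimodularity, which follows from the elementary closure properties of the TU class. The conceptual content is just the identification of the polytope with a classical network-flow / transportation polytope, which also prepares the correspondence with min-cost flows exploited later in Section~\ref{sec:rocuts}.
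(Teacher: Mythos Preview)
Your proof is correct and follows essentially the same route as the paper: both identify the constraint matrix of (DEV01-RELAX) as the incidence matrix of a bipartite graph (with node classes corresponding to bands $k\in K$ and indices $j\in J$), conclude total unimodularity, and then obtain integrality from the integral right-hand side. Your additional remarks on the closure properties of TU and the transportation-polytope interpretation are sound elaborations but not a different argument.
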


\begin{proof}
We prove the integrality by showing that the coefficient matrix is totally unimodular.
Consider first the linear relaxation of (DEV01):
\begin{eqnarray}\label{DEV01-relax}
&\max& \sum_{j\in J}\sum_{k\in K}d_{ij}^{k} \hspace{0.1cm} x_{j} \hspace{0.1cm} y_{ij}^{k}
\hspace{2.0cm} \mbox{(DEV01-RELAX)}
\nonumber
\\
&&
\sum_{j\in J} y_{ij}^{k} = \theta_{k}
\hspace{2.9cm}
k\in K
\label{DEV01-relax-cnstr1}
\\
&&
\sum_{k\in K}y_{ij}^{k} \leq 1
\hspace{2.95cm} j\in J
\label{DEV01-relax-cnstr2}
\\
&&
y_{ij}^{k} \geq 0
\hspace{3.6cm} j\in J, k\in K ,
\label{DEV01-relax-cnstr3}
\end{eqnarray}
where we dropped constraints $y_{ij}^{k} \leq 1$, that are dominated by constraints (\ref{DEV01-relax-cnstr2}).

\bigskip

\noindent
The constraints of (DEV01-RELAX) have  the following matrix form:
\[
F_i
\hspace{0.1cm}
y_i
\hspace{0.1cm}
=
\hspace{0.1cm}
{\small
    \left(\begin{array}{c|c|c|c}
    &  & \\
    I & I & \mbox{ } \cdots \mbox{ } & I\\
    &  &\\
    \hline
    1 \cdots 1 & & \\
     & 1\cdots1 & &\\
     & & \mbox{ } \ddots \mbox{ }  &\\
     & &  & 1\cdots1 \\
    \end{array}\right)
\hspace{0.2cm}
    \left(\begin{array}{c}
    y_{i1}^{K^{-}}\\
    \vdots\\
    y_{i1}^{K^{+}}\\
    \hline
    \vdots\\
    y_{ij}^{k}\\
    \vdots\\
    \hline
    y_{in}^{K^{-}}\\
    \vdots\\
    y_{in}^{K^{+}}\\
    \end{array}\right)
\hspace{0.1cm}
\begin{array}{c}
= \\
\le
\end{array}
\hspace{0.1cm}
    \left(\begin{array}{c}
    \vdots\\
    \theta_k\\
    \vdots\\
    \hline
    \vdots\\
    1\\
    \vdots\\
    \end{array}\right)
    }
\hspace{0.1cm}
=
\hspace{0.1cm}
g_i
\]

\noindent
%
%
It is easy to verify that $F_i$ is the incidence matrix of a bipartite graph: the elements of the two disjoint set of nodes of the graph are in correspondence with the rows of the two distinct layers of block in $F_i$. Moreover, every column has exactly two elements that are not equal to zero: one in the upper layer of blocks and one in the lower layer.  Being the incidence matrix of a bipartite graph, $F_i$ is a totally unimodular matrix \cite{NeWo88}.
Since $F_i$ is totally unimodular and the vector $g_i$ is integral, it is well-known that the polytope defined by $F_i \leq g_i$ and $y_i \geq 0$ is integral, thus completing the proof.
\qed
\end{proof}

\noindent
Due to this integrality property, we can use strong duality to prove the main result of this section.
\begin{theorem} \label{th:compact_RLP}
The robust counterpart of a Mixed-Integer Linear Program under the multi-band scenario set $\SMB$ is equivalent to the following compact Mixed-Integer Linear Program:
\begin{eqnarray*}\label{robustCompactLP}
\max && \sum_{j\in J} c_{j} \hspace{0.1cm} x_{j}
\hspace{5.6cm} \mbox{(Rob-MILP)}
\\
&& \sum_{j\in J} \bar{a}_{ij} \hspace{0.1cm} x_{j}
+ \sum_{k\in K} \theta_{k} \hspace{0.1cm} w_{i}^{k}
+ \sum_{j\in J} z_{ij} \leq b_{i}
\hspace{1.4cm} i\in I
\\
&&  w_{i}^{k} + z_{ij} \geq d_{ij}^{k} \hspace{0.1cm} x_{j}
\hspace{4.30cm} i \in I, j\in J, k\in K
\\
&& z_{ij} \geq 0
\hspace{5.85cm} i \in I,j\in J
\\
&& w_{i}^k \in \mathbb{R}
\hspace{5.8cm} i\in I, j\in K
\\
&& x_{j} \geq 0
\hspace{5.9cm} j\in J
\\
&& x_{j} \in \mathbb{Z}_{+}
\hspace{5.6cm} j \in J_{\mathbb{Z}} \subseteq J \;.
\end{eqnarray*}
\end{theorem}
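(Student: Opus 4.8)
The plan is to dualize, for each constraint $i\in I$ separately, the inner worst-case problem that defines $DEV_i(x,\SMB)$, and then fold the resulting dual multipliers into the outer optimization as new decision variables. This is the Bertsimas--Sim-style reformulation argument, but here it is made rigorous by Proposition~\ref{th:integrality_DEV01}.

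First I would fix $i\in I$ and a candidate $x\ge 0$, and observe that by Lemma~\ref{lem:profile_satisfying} the quantity $DEV_i(x,\SMB)$ equals the optimal value of the $0$-$1$ program (DEV01). By Proposition~\ref{th:integrality_DEV01} the feasible region of (DEV01-RELAX) is an integral polytope, and since (DEV01) and (DEV01-RELAX) have the same objective, their optimal values coincide; hence $DEV_i(x,\SMB)$ equals the optimal value of (DEV01-RELAX). I would also note that (DEV01-RELAX) is feasible (a valid assignment of the $\theta_k$ exists because $\sum_{k\in K}l_k\le n$, $u_0=n$, and $\sum_{k\in K}\theta_k=n$ as remarked after (\ref{profile_scenario_2})) and bounded (the $y$-variables lie in $[0,1]$), so strong LP duality applies without qualification.

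Second, I would take the LP dual of (DEV01-RELAX), assigning a free multiplier $w_i^k$ to each equality constraint (\ref{DEV01-relax-cnstr1}) and a nonnegative multiplier $z_{ij}$ to each inequality (\ref{DEV01-relax-cnstr2}). A routine computation (with $x$ treated as fixed data) yields the dual
\begin{eqnarray*}
\min && \sum_{k\in K}\theta_{k}\,w_{i}^{k}+\sum_{j\in J}z_{ij}\\
&& w_{i}^{k}+z_{ij}\ \ge\ d_{ij}^{k}\,x_{j}\qquad j\in J,\ k\in K\\
&& z_{ij}\ \ge\ 0\qquad j\in J\\
&& w_{i}^{k}\in\mathbb{R}\qquad k\in K,
\end{eqnarray*}
and by strong duality this minimum equals $DEV_i(x,\SMB)$. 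Substituting into the $i$-th robust constraint, $\sum_{j\in J}\bar a_{ij}x_j+DEV_i(x,\SMB)\le b_i$ becomes $\sum_{j\in J}\bar a_{ij}x_j+\min\{\,\cdot\,\}\le b_i$; since the minimum is attained, this holds if and only if there exist multipliers $(w_i,z_i)$ feasible for the dual above with $\sum_{j\in J}\bar a_{ij}x_j+\sum_{k\in K}\theta_{k}w_{i}^{k}+\sum_{j\in J}z_{ij}\le b_i$. Promoting $w_i^k,z_{ij}$ to decision variables of the outer problem, one independent block per $i\in I$, and collecting these conditions over all $i$ together with the unchanged objective $\sum_{j\in J}c_jx_j$ and the constraints $x\ge0$, $x_j\in\mathbb{Z}_+$ for $j\in J_{\mathbb{Z}}$, reproduces exactly (Rob-MILP). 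The stated equivalence follows: the $x$ that are robust feasible are precisely the $x$-components of feasible points of (Rob-MILP), and the objectives coincide, so the two problems share optimal value and optimal $x$-solutions.

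The only non-mechanical point — and hence the part I would write out most carefully — is the epigraph step: replacing ``$\min_{(w_i,z_i)}(\cdot)\le b_i-\sum_j\bar a_{ij}x_j$'' by ``$\exists\,(w_i,z_i)$ feasible with $(\cdot)\le b_i-\sum_j\bar a_{ij}x_j$'' without changing the feasible set of $x$. This relies on the minimum being attained (which is why feasibility and boundedness of (DEV01-RELAX), established in the first step, matter) and on the fact that each constraint $i$ owns a disjoint block of multipliers, so a minimizer can be chosen independently for every $i$ with no coupling. Everything else is standard LP duality, legitimized end-to-end by the total unimodularity established in Proposition~\ref{th:integrality_DEV01}.
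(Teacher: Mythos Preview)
Your proposal is correct and follows essentially the same route as the paper: invoke Proposition~\ref{th:integrality_DEV01} to replace (DEV01) by its LP relaxation, apply strong LP duality to obtain (DEV01-RELAX-DUAL), and substitute the dual into each robust constraint. You are in fact more explicit than the paper about the epigraph step and about why feasibility and boundedness of (DEV01-RELAX) are needed, but the argument is the same.
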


\begin{proof}
As first step, consider the dual problem of (DEV01-RELAX):
\begin{eqnarray*}\label{dual_DEV_01}
\min &&
\sum_{k\in K} \theta_{k} \hspace{0.1cm} w_{i}^{k} + \sum_{j\in J} z_{ij}
\hspace{2.8cm}  \mbox{(DEV01-RELAX-DUAL)}
\\
&& w_{i}^{k} + z_{ij} \geq d_{ij}^{k} \hspace{0.1cm} x_{j}
\hspace{3.15cm} j\in J, k\in K
\\
&& z_{ij} \geq 0
\hspace{4.7cm} j\in J
\\
&&  w_{i}^{k} \in \mathbb{R}
\hspace{4.6cm} k \in K \; ,
\end{eqnarray*}

\noindent
where the dual variables $w_{i}^{k}, z_{ij}$ are respectively associated with the primal constraints (\ref{DEV01-relax-cnstr1}, \ref{DEV01-relax-cnstr2}) of (DEV01-RELAX) defined for constraint $i$.

Since (DEV01-RELAX) is feasible and bounded by definition of the scenario set $\SMB$, also (DEV01-RELAX-DUAL) is feasible and bounded and the optimal values of the two problems are the same (strong duality). Then, by Proposition \ref{th:integrality_DEV01}, we can replace the maximum total deviation $DEV_i(x,\SMB)$ with problem (DEV01-RELAX-DUAL), obtaining the compact Mixed-integer Linear Program (Rob-MILP). This concludes the proof.
\qed
\end{proof}

\noindent
In comparison to (MILP), this compact formulation uses $K \cdot m + n \cdot m$ additional
variables and includes $K \cdot n \cdot m$ additional constraints.
Due to the dominance results, we remark that the new formulation (Rob-MILP) has $K \cdot m$ variables less than the compact counterpart that we presented in \cite{BuDA12a}.
Moreover, we note that when (MILP) does not include integer variables (i.e., $J_{\mathbb{Z}} = \emptyset$) and is thus a pure Linear Program, (Rob-MILP) is a pure Linear Program as well.

%
%

%
%

\section{Separation of Robustness Cuts}\label{sec:rocuts}

As an alternative to directly solving the compact robust counterpart (Rob-MILP), we have investigated the development of a cutting-plane algorithm: we start by solving the nominal problem (MILP) and then test if the optimal solution is robust. If not, we generate a cut that imposes robustness (\emph{robustness cut}) and we add it to the problem. This initial step can then be iterated as in a typical cutting plane method \cite{NeWo88}.

Testing whether a solution $x \in \mathbb{R}_+^{n - |J_{\mathbb{Z}}|} \times \mathbb{Z}_+^{|J_{\mathbb{Z}}|}$ is robust, corresponds to confirming that
$\sum_{j \in J} \bar{a}_{ij} \hspace{0.1cm} x_{j} + DEV_i(x,\SMB) \leq b_i$ for every constraint $i\in I$. In the case of the Bertsimas-Sim model,  this is very simple and merely requires to sort the deviations and choose the largest $\Gamma_i$ values \cite{FiMo12}. In the case of a multi-band scenario set, this simple approach does not guarantee the robustness of a computed solution.
However, in the following theorem we prove that testing the robustness of a solution corresponds to solving a \emph{min-cost flow problem} \cite{AhMaOr93}.
We recall that an integral min-cost flow of value $n$ can be computed in polynomial time \cite{AhMaOr93}.

\begin{theorem} \label{theorem:robCut}
Let $x \in \mathbb{R}_+^{n - |J_{\mathbb{Z}}|} \times \mathbb{Z}_+^{|J_{\mathbb{Z}}|}$ and let $\SMB$ be a multi-band scenario set.
Moreover, let $(G,c)_{x}^{i}$ be the min-cost flow instance corresponding to $x$ and a constraint $i \in I$ of (MILP) and built according to the rules specified in the proof.

\noindent
The solution $x$ is robust for constraint $i$ w.r.t. $\SMB$ if and only if
$$
\bar{a}_{i}' x - c^{*}_i(x) \leq b_{i}
$$
where $c^{*}_i(x)$ is the value of  a min cost flow in the instance $(G,c)_{x}^{i}$.
\end{theorem}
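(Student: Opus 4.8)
The plan is to recognize the maximum–deviation problem (DEV01) --- which, by Lemma~\ref{lem:profile_satisfying} together with the discussion preceding~(\ref{DEV01_bounds}), computes $DEV_i(x,\SMB)$ exactly --- as (the negative of) a min-cost flow problem on a small layered network, and then to rewrite the robustness condition $\sum_{j\in J}\bar a_{ij}x_j + DEV_i(x,\SMB)\le b_i$ in terms of the optimal flow value.

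First I would describe the instance $(G,c)_x^i$ explicitly. Introduce a source $s$ and a sink $t$; for every variable index $j\in J$ a node $v_j$; and for every band $k\in K$ a node $u_k$. Add arcs $s\to v_j$ of capacity $1$ and cost $0$, arcs $v_j\to u_k$ of capacity $1$ and cost $-d_{ij}^{k}\,x_j$, and arcs $u_k\to t$ whose lower and upper capacity are both $\theta_k$ and whose cost is $0$. Because $\sum_{k\in K}\theta_k=n$ and each $\theta_k\ge 0$ (which follows from~(\ref{profile_scenario_2}) and the definition of $p$), an $s$--$t$ flow of value $n$ exists, and in any such flow every arc $s\to v_j$ must be saturated; since $G$ is a directed acyclic graph (the layering $s\to\{v_j\}\to\{u_k\}\to t$), the min-cost flow problem is well posed despite the negative arc costs.

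Next I would set up the bijection between integral $s$--$t$ flows of value $n$ in $(G,c)_x^i$ and the feasible solutions of (DEV01), obtained by letting $y_{ij}^{k}$ be the flow on arc $v_j\to u_k$. The fixed capacity $\theta_k$ of $u_k\to t$ together with flow conservation at $u_k$ reproduces~(\ref{DEV01_bounds}); flow conservation at $v_j$ with incoming capacity $1$ reproduces~(\ref{DEV01_singleBand}); integrality of the flow gives~(\ref{DEV01_bandVars}). Under this correspondence an integral flow of value $n$ is precisely a band partition $(J_{ik})_{k\in K}$ with $|J_{ik}|=\theta_k$, i.e.\ (the band partition of) a profile and bound valid scenario, and the cost of the flow equals $-\sum_{j\in J}\sum_{k\in K}d_{ij}^{k}x_j y_{ij}^{k}$. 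Hence a min-cost integral flow corresponds to an optimal solution of (DEV01), and $c^{*}_i(x)=-DEV_i(x,\SMB)$. (That the min-cost flow may be taken integral follows either from total unimodularity of the node--arc incidence matrix of a network, or simply because the LP relaxation of the flow problem is --- up to the trivial $s\to v_j$ and $u_k\to t$ arcs --- the polytope of (DEV01-RELAX), which is integral by Proposition~\ref{th:integrality_DEV01}.) Substituting $DEV_i(x,\SMB)=-c^{*}_i(x)$ into the robustness requirement $\sum_{j\in J}\bar a_{ij}x_j + DEV_i(x,\SMB)\le b_i$ yields exactly $\bar a_i' x - c^{*}_i(x)\le b_i$, which is the assertion.

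The main obstacle is not conceptual depth but careful bookkeeping on two points: (i) the feasibility of a flow of value $n$, which hinges on the identities $\sum_{k}\theta_k=n$ and $\theta_k\ge0$ for all $k$, so these must be verified from~(\ref{profile_scenario_2}); and (ii) the sign convention, since (DEV01) is a \emph{maximization} while min-cost flow is a minimization, which forces the arc costs to be the negated quantities $-d_{ij}^{k}x_j$ (and one must note that this creates no negative-cost cycle, as $G$ is acyclic). Once these are in place, the equivalence of the flow polytope with the polytope of (DEV01-RELAX), and hence the reuse of Proposition~\ref{th:integrality_DEV01}, is routine.
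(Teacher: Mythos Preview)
Your proposal is correct and follows essentially the same approach as the paper: the paper constructs the identical layered network (source, variable nodes $v_j$, band nodes, sink) with the same capacities and arc costs $-d_{ij}^{k}x_j$, and establishes the same bijection between integral $s$--$t$ flows of value $n$ and profile-and-bound-valid scenarios (equivalently, feasible solutions of (DEV01)), yielding $c(f)=-\sum_{j}d_{ij}^{S}x_j$ and hence $c_i^*(x)=-DEV_i(x,\SMB)$. The only cosmetic differences are that the paper imposes just an upper bound $\theta_k$ on the band-to-sink arcs (forced to equality anyway since $\sum_k\theta_k=n$), and it phrases the bijection directly in terms of scenarios rather than (DEV01) solutions; your added remarks on feasibility of the flow, acyclicity, and integrality via Proposition~\ref{th:integrality_DEV01} are welcome clarifications but do not change the argument.
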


\begin{proof}
We prove the statement by showing that there exists a one-to-one correspondence between the scenarios $S\in \SMB$ satisfying the profile and bound properties (see Lemma~\ref{lem:profile_satisfying}) and  the integral flows $f\in F_i$ associated with the min-cost flow instance $(G,c)_x^i$ (defined below), with cost relation
\[ c(f) = -\sum_{j\in J} d_{ij}^S x_j \;. \]
Thus,
\[ DEV_i(x,\SMB)  = \max_{S\in \SMB} \sum_{j\in J} d_{ij}^S x_j
= -\min_{f\in F_i} c(f) = - c_i^*(x),\]
which concludes the proof.

The min-cost flow instance $(G,c)_{x}^{i}$ is defined as follows.
The directed graph $G$  contains one vertex $v_j$ for each variable index $j \in J$, one vertex $w_k$ for each band $k \in K$ and two vertices $s,t$ that are the source and the sink of the flow, i.e.~$V=\bigcup_{j\in J} \{v_{j}\} \cup \bigcup_{k\in K}\{w_{k}\} \cup \{s,t\}$.
The set of arcs $A$ is the union of three sets $A_1, A_2, A_3$. The set $A_1$ contains one arc from $s$ to every vertex $v_{j}$, i.e. $A_{1}=\{(s,v_{j}): j\in J\}$.
The set $A_2$ contains one arc from every  vertex $v_{j}$ to every  vertex $w_{k}$, i.e. $A_{2}=\{(v_{j},w_{k}): j\in J, k\in K\}$. Finally, the set $A_3$ contains one arc from every  vertex $w_{k}$ to the sink $t$, i.e. $A_{3}=\{(w_{k},t):  k\in K\}$.
By construction, $G(V,A)$ is bipartite and acyclic.
To each arc $a \in A$, we associate an upper bound $u_a$ on the flow that can be sent on $a$ and  a cost $c_a$ of sending one unit of flow on $a$.
These two values are set in the following way: $(1,0)$ when $a=(s,v_j)\in A_{1}$; $(1,-d_{ij}^{k}x_{j})$ when $a=(v_j,w_k)\in A_{2}$; $(\theta_k,0)$ when $a=(w_k,t)\in A_{3}$.
Finally, the amount of flow that must be sent trough the network from $s$ to $t$ is equal to $n$. We denote by $F_i$ the set of feasible integral flows of value $n$ of the min-cost flow instance $(G,c)_{x}^{i}$.
The \emph{cost} of an $(s,t)$-flow $f \in F_i$ is defined by $c(f)=\sum_{a\in A} c_{a}f_{a}$.

Given a scenario $S\in \SMB$ that satisfies the profile and bound properties, we construct an integral flow $f$ for the min-cost flow instance $(G,c)_{x}^{i}$, in the following way:
\begin{eqnarray}
&&
  f_{(s, v_j)} = 1
        \hspace{3.7cm} \forall j \in J
  \label{flowDef1}
  \\
&&
  f_{(v_j,w_k)} = 1
        \hspace{0.1cm} \Longleftrightarrow \hspace{0.1cm}
        d_{ij}^S = d_{ij}^{k}
        \hspace{1.1cm} \forall j \in J, k \in K
  \label{flowDef2}
  \\
&&
  f_{(w_k, t)} = \theta_k
        \hspace{3.5cm} \forall k \in K .
  \label{flowDef3}
\end{eqnarray}

\noindent
The flow $f$ respects the arc capacities and has a value of $n$. Since each deviation is assigned by $S$ to exactly one band, $f$ also satisfies flow conservation in each vertex $v_j$. Furthermore, $S$ assigns exactly $\theta_k$ values to band $k$, and thus
\[
\sum_{j\in J} f_{(v_j,w_k)} = |J_{ik}(S)|=\theta_k = f_{(w_k,t)} \;.
\]
Hence, $f$ is in $F_i$ and by definition of the arc costs $-c(f)= \sum_{j\in J} d_{ij}^S x_j$.

Given a feasible integral flow $f\in F_i$ of value $n$ and cost $c(f)$ of the min-cost flow instance $(G,c)_x^i$, we construct a scenario $S\in \SMB$
satisfying the profile and bound properties in the following way:
\[
d_{ij}^S = d_{ij}^k \Longleftrightarrow f_{(v_j,w_k)}=1 \hspace{1.0cm} \forall j\in J, k\in K.
\]
This definition is valid, since each arc $(s,v_j)$ carries one unit of flow and thus due to flow conservation exactly one arc $(v_j, w_k)$ carries flow. Furthermore, since $\sum_{k\in K} \theta_k =n$, exactly $\theta_k$ arcs with positive flow value enter $w_k$ and therefore $|J_{ik}(S)|= \theta_k$. Summing up,
\[
c(f)=\sum_{j\in J}\sum_{k\in K} - d_{ij}^k x_j f_{(v_j,w_k)}
= \sum_{k\in K} \sum_{j\in J_{ik}(S)} -d_{ij}^S x_j
= \sum_{j\in J} -d_{ij}^S x_j. \]
This concludes the proof.
\qed
\end{proof}

\noindent
From Theorem \ref{theorem:robCut}, we can immediately derive the following corollary:

\begin{corollary} \label{corollary:robCut}
A solution  $x \in \mathbb{R}_+^{n - |J_{\mathbb{Z}}|} \times \mathbb{Z}_+^{|J_{\mathbb{Z}}|}$ is robust w.r.t. $\SMB$ if and only if
$$
\bar{a}_{i}' x - c^{*}_i(x) \leq b_{i}
\hspace{1cm}
\forall i \in I,
$$
where $c^{*}_i(x)$ is the value of a minimum cost flow in $(G,c)_{x}^{i}$.
\end{corollary}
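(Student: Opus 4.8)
The plan is to obtain the corollary as a direct conjunction of the single-constraint characterization established in Theorem~\ref{theorem:robCut}. First I would recall the definition of robustness for the full problem: a solution $x$ is robust w.r.t.\ $\SMB$ precisely when it satisfies every robust constraint, i.e.\ when $\sum_{j\in J}\bar a_{ij}x_j + DEV_i(x,\SMB) \le b_i$ holds for all $i\in I$ simultaneously. By definition this is equivalent to saying that $x$ is robust for constraint $i$ for each $i\in I$ separately, since the maximum total deviation $DEV_i(x,\SMB)$ and the right-hand side $b_i$ are constraint-specific and there is no coupling across constraints in the robustness requirement.

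The second step is to invoke Theorem~\ref{theorem:robCut} once for each $i\in I$. For a fixed $i$, the theorem states that $x$ is robust for constraint $i$ if and only if $\bar a_i' x - c_i^*(x) \le b_i$, where $c_i^*(x)$ is the value of a minimum cost flow in the instance $(G,c)_x^i$ built according to the rules in the proof of the theorem. Note that the graph $G$ (its vertices $v_j$, $w_k$, $s$, $t$ and its arc sets $A_1,A_2,A_3$) and the capacities are the same for every constraint, while the arc costs $-d_{ij}^k x_j$ on the arcs of $A_2$ and the target flow value $n$ depend on $i$; hence there is a distinct instance $(G,c)_x^i$ and a distinct optimal value $c_i^*(x)$ for each constraint, but the logical equivalence holds for each one individually.

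Finally, conjoining these $m=|I|$ equivalences yields exactly the claimed statement: $x$ is robust w.r.t.\ $\SMB$ $\iff$ $x$ is robust for constraint $i$ for all $i\in I$ $\iff$ $\bar a_i' x - c_i^*(x) \le b_i$ for all $i\in I$. I do not expect any genuine obstacle here, as all the substantive work — the correspondence between profile- and bound-valid scenarios and integral flows, and the cost identity $c(f) = -\sum_{j\in J} d_{ij}^S x_j$ — is already carried out in Theorem~\ref{theorem:robCut}; the corollary is merely the ``$\forall i\in I$'' lifting of it, and the only thing worth emphasizing in the write-up is that robustness of $x$ decomposes constraint by constraint.
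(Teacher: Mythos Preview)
Your proposal is correct and matches the paper's treatment: the corollary is stated there as an immediate consequence of Theorem~\ref{theorem:robCut}, with no separate proof given, and your constraint-by-constraint conjunction is exactly the intended argument. One small slip in your parenthetical remark: the target flow value $n$ (the number of variables) does not depend on $i$; only the arc costs $-d_{ij}^k x_j$ on $A_2$ vary across constraints.
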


\noindent
According to this corollary, we can test the robustness of a solution  $x \in \mathbb{R}_+^{n - |J_{\mathbb{Z}}|} \times \mathbb{Z}_+^{|J_{\mathbb{Z}}|}$
by computing an integral flow $f^{i}$ of minimum cost $c^{*}_i(x)$ in $(G,c)_{x}^{i}$ for every $i \in I$.
If $\bar{a}' x - c^{*}_i(x) \leq b_{i}$ for every $i\in I$,
then $x$ is robust feasible. Otherwise $x$ is not robust and there exists an index
 $i \in I$ such that $\bar{a}' x - c^{*}_i(x) > b_{i}$ and thus
\begin{equation}\label{eq:robCut}
\sum_{j\in J} \bar{a}_{ij} \hspace{0.1cm} x_{j}
+ \sum_{j\in J} \sum_{k\in K} d_{ij}^{k} \hspace{0.1cm} x_{j} \hspace{0.1cm} f^i_{(v_j,w_k)} \leq b_{i}
\end{equation}
is valid for the polytope of the robust
solutions and cuts off the solution $x$.

%
%


\section{Binary Programs with Cost Uncertainties \label{sec:01_Opt}}

In this section, we present results on a subfamily of Mixed-Integer Linear Programs, namely pure Binary Programs, and consider the case in which uncertainty only affects the objective function. Specifically, we investigate Binary Programs of the form:
\begin{eqnarray*}
\min && \hspace{0.1cm} \sum_{j\in J}c_{j} \hspace{0.05cm} x_{j} \hspace{3.4cm} (BP)
\\
&&
x \in X \subseteq \{0,1\}^n
\end{eqnarray*}

\noindent
with non-negative cost vector, i.e. $c_j \geq 0$, for all $ j \in J = \{1,\ldots,n\}$. Relevant problems like the minimum spanning
tree problem, the maximum weighted matching problem and the shortest
path problem belong to this class of problems.

We study the robust version of (BP) under multi-band uncertainty affecting just the cost vector.  More formally, for each element
$j\in J$ we are given the nominal cost $\bar{c}_{j}$ and a sequence of
$K^{+} + 1$ deviation values $d_{j}^{k}$, with  $k\in K=\{0,\ldots,K^{+}\}$,
such that $0 = d_{j}^{0} <d_{j}^{1}<\ldots<d_{j}^{K+}<\infty$ (note that in contrast to
Section \ref{sec:compactMB} we consider only positive deviations). Through these values, we define: 1) the zero-deviation band
corresponding to the single value $d_{j}^{0}=0$; 2) a set $K^{+}$
of positive deviation bands, such that each band $k\in K \backslash \{0\}$
corresponds to the range $(d_{j}^{k-1},d_{j}^{k}]$.
Furthermore,  integer values $l_k,u_k \in \mathbb{Z}, 0 \leq l_k \leq u_k \leq n$
 represent the lower and upper bounds on the number
of deviations falling in each band $k \in K$.

Since (BP) is a special case of (MILP), we can define its robust counterpart (Rob-BP) by applying Lemma~\ref{lem:profile_satisfying} and Theorem~\ref{th:compact_RLP}:
\begin{eqnarray}
\min
&&
\sum_{j\in J}c_{j} \hspace{0.05cm} x_{j}
+ \sum_{k\in K} \theta_{k} \hspace{0.05cm} w_{k}+\sum_{j\in J} z_{j}
\hspace{2cm}  (Rob\mbox{-}BP)
\nonumber
\\
&&
w_{k}+z_{j}\ge d_{j}^{k} \hspace{0.05cm} x_{j}
\hspace{2.9cm}
j\in J, k\in K
\label{cnstr_vzxROB01}
\\
&&
w_{k} \geq 0
\hspace{4.2cm}
k\in K
\nonumber
\\
&&
z_{j} \geq 0
\hspace{4.3cm}
j\in J
\nonumber
\\
&&
x \in X ,
\nonumber
\end{eqnarray}

\noindent
In order to get a robust optimal solution, we can solve (Rob-BP) by means of a commercial MIP solver. However, as an alternative, in what follows we prove that we can get a robust optimal solution by solving a sequence of nominal problems (BP) with modified objective function.

\medskip

\begin{remark} \label{reamrk:noBands}
Without loss of generality, we can assume that the number of bands $K$ is constant and such
that $K \leq n$. Indeed, if $K > n$ then $K' = K - n$ bands have profile $\theta_k = 0$ and no coefficient is assigned to them, so they can be eliminated from the problem.
\end{remark}

\bigskip
\noindent
As first step, in the following lemma we prove that for a fixed vector $\bar{w} \geq 0^{K}$, solving (Rob-BP) corresponds to solving (BP) with modified cost coefficients.
\begin{lemma} \label{lemma:fixedW}
Given $\bar{w} \geq 0^{K}$, solving (Rob-BP) with $w = \bar{w}$ is equivalent to solving the following problem:
\begin{eqnarray*}
\min && \hspace{0.1cm} \sum_{j\in J} \bar{c}_{j} \hspace{0.05cm} x_{j}
\hspace{2.0cm}
( \hspace{0.05cm} Rob\mbox{-}BP(\bar{w}) \hspace{0.05cm})
\\
&& x \in X \; ,
\end{eqnarray*}
with
\begin{eqnarray*}
&&
\bar{c}_j = c_j + \bar{d}_j \hspace{0.5cm} \forall j \in J \; ,
\\
&&
\bar{d}_j = \max \hspace{0.1cm} \{ 0, \hspace{0.1cm} \max_{k \in K} \hspace{0.1cm} \{d_j^{k} - \bar{w}_k\} \} \hspace{0.5cm} \forall j \in J  \; .
\end{eqnarray*}
%
\end{lemma}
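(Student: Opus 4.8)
The plan is to fix $w = \bar w$ in $(Rob\mbox{-}BP)$ and then \emph{project out} the $z$ variables, showing that the resulting problem in $x$ alone coincides with $(Rob\mbox{-}BP(\bar w))$ up to an additive constant. First I would observe that, once $w$ is fixed to $\bar w$, the term $\sum_{k\in K}\theta_k \bar w_k$ in the objective of $(Rob\mbox{-}BP)$ is a constant and plays no role when comparing optimal solutions; only $\sum_{j\in J} c_j x_j + \sum_{j\in J} z_j$ together with the constraints $\bar w_k + z_j \ge d_j^k x_j$ ($j\in J,\ k\in K$) and $z_j\ge 0$ remain to be optimized over $x\in X$ and $z\ge 0$.

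Next I would exploit the \emph{separability} of the $z$ variables: each $z_j$ appears only in the constraints indexed by the same $j$, and the objective is nondecreasing in $z_j$. Hence, for any fixed feasible $x$, the optimal choice is $z_j = \max\{0,\ \max_{k\in K}\{d_j^k x_j - \bar w_k\}\}$ for every $j\in J$. I would then carry out the case distinction on $x_j\in\{0,1\}$, which is legitimate since $X\subseteq\{0,1\}^n$. If $x_j = 0$, then $d_j^k x_j - \bar w_k = -\bar w_k \le 0$ for all $k$ (using $\bar w\ge 0$), so the optimal $z_j$ equals $0 = \bar d_j x_j$. If $x_j = 1$, then $d_j^k x_j - \bar w_k = d_j^k - \bar w_k$, so the optimal $z_j$ equals $\max\{0,\ \max_{k\in K}\{d_j^k - \bar w_k\}\} = \bar d_j = \bar d_j x_j$. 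In both cases the optimal $z_j$ equals $\bar d_j x_j$, whence at optimality $\sum_{j\in J} z_j = \sum_{j\in J} \bar d_j x_j$.

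Substituting this back, the objective of $(Rob\mbox{-}BP)$ with $w=\bar w$ becomes $\sum_{j\in J}(c_j+\bar d_j)x_j + \sum_{k\in K}\theta_k\bar w_k = \sum_{j\in J}\bar c_j x_j + \mathrm{const}$, so its set of minimizers over $x\in X$ is exactly the set of minimizers of $(Rob\mbox{-}BP(\bar w))$, and the two optimal values differ only by the constant $\sum_{k\in K}\theta_k\bar w_k$. This establishes the claimed equivalence. There is no serious obstacle here: the only points requiring a little care are the separability argument that lets us optimize each $z_j$ independently, and the sign argument using $\bar w\ge 0$ to kill the contribution of indices with $x_j=0$ — which is precisely what makes the outer ``$\max\{0,\cdot\}$'' in the definition of $\bar d_j$ the correct expression.
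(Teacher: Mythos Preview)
Your proposal is correct and follows essentially the same approach as the paper: fix $w=\bar w$, isolate the constant term, then use the binary nature of $x_j$ together with $\bar w\ge 0$ to show that the optimal $z_j$ equals $\bar d_j x_j$, and substitute back. The only cosmetic difference is that you argue directly via separability and monotonicity of $z_j$, whereas the paper phrases the same step as a short contradiction (assuming some $\bar z_\ell > \bar d_\ell \bar x_\ell$ and decreasing it).
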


\begin{proof}
If $w = \bar{w}$ then (Rob-BP) reduces to:
\begin{eqnarray}
\sum_{k\in K} \theta_{k} \hspace{0.05cm} \bar{w}_{k}
+ \hspace{0.1cm}
\min
&&
\sum_{j\in J} c_{j} \hspace{0.05cm} x_{j}
+ \sum_{j\in J} z_{j}
\nonumber
\\
&&
z_{j}\ge d_{j}^{k} \hspace{0.05cm} x_{j} - \bar{w}_{k}
\hspace{1.4cm}
j\in J, k\in K
\label{cnstr_robFixedW_1}
\\
&&
z_{j} \geq 0
\hspace{2.8cm}
j\in J
\label{cnstr_robFixedW_2}
\\
&&
x \in X ,
\nonumber
\end{eqnarray}

\noindent
Because of the binary nature of $x_j$, constraints (\ref{cnstr_robFixedW_1}-\ref{cnstr_robFixedW_2})
can be substituted by $z_{j} \geq \bar{d}_j \hspace{0.05cm}  x_{j}$ since:
%
%
\begin{eqnarray*}
\left.
    \begin{array}{lll}
         z_{j} \geq d_{j}^{k} \hspace{0.05cm} x_{j} - \bar{w}_{k},
            \hspace{0.1cm}
            \forall k \in K
         \\
         z_{j} \geq 0
    \end{array}
\right.
\hspace{0.25cm} \Longleftrightarrow \hspace{0.25cm}
z_{j} &\geq& \max \hspace{0.05cm} \{ 0, \hspace{0.05cm}
\max_{k \in K} \hspace{0.05cm} \{d_j^{k} - \bar{w}_k\} \} \hspace{0.1cm} x_{j}
\hspace{0.1cm} = \hspace{0.1cm}
\bar{d}_j \hspace{0.05cm} x_{j}
\end{eqnarray*}

\noindent
We note that $\bar{d}_j \hspace{0.05cm} x_{j} \geq 0$ by the definition of $\bar{d}_j$.
Moreover, we note that an optimal solution $(\bar{x},\bar{z})$ is such that
$\bar{z}_{j} = \bar{d}_j  \bar{x}_{j}$, $\forall j \in J$. Indeed, suppose by contradiction that w.l.o.g. there exists a unique $\ell \in J: \bar{z}_{\ell} > \bar{d}_{\ell} \hspace{0.1cm} \bar{x}_{\ell}$. Then we can define a feasible solution $(x',z')$ by setting:
\begin{eqnarray*}
x'_j &=& \bar{x}_j \hspace{1.5cm} \forall j \in J
\\
z'_j &=& \bar{z}_j \hspace{1.55cm} \forall j \in J \backslash \{\ell\}
\\
z'_\ell &=& \bar{z}_\ell - s
\end{eqnarray*}


\noindent
with $s = \bar{z}_{\ell} - \bar{d}_{\ell} \hspace{0.05cm} \bar{x}_{\ell} > 0$,
thus obtaining a reduction $s$ in cost and contradicting the optimality of $(\bar{x},\bar{w})$.
As $z_{j} = \bar{d}_{j} \hspace{0.05cm} x_{j}$ at the optimum, for all $ j \in J$, we obtain:
\begin{eqnarray}
\sum_{k\in K} \theta_{k} \hspace{0.05cm} \bar{w}_{k}
+ \hspace{0.1cm}
\min
&&
\sum_{j\in J} c_{j} \hspace{0.05cm} x_{j}
+ \sum_{j\in J} \bar{d}_j \hspace{0.05cm} x_{j}
\nonumber
\\
&&
x \in X ,
\nonumber
\end{eqnarray}

\noindent
thus completing the proof.
\qed
\end{proof}

\medskip

\noindent
From this lemma it follows immediately that if $w^*$ belongs to some optimal solution of (Rob-BP), we can solve (Rob-BP($w^*$)) and obtain an optimal robust solution $x^*$.
\begin{corollary} \label{corollary:fixedW}
If $(x^*, z^*, w^*)$ is an optimal solution to (Rob-BP) and $Z^*$ is the corresponding optimal value, then $Z^*$ is such that:
$$
Z^* = \sum_{k\in K} \theta_{k} \hspace{0.05cm} w_{k}^*
    \hspace{0.05cm} + \hspace{0.05cm}
    \min_{x \in X} \sum_{j\in J} \bar{c}_{j} \hspace{0.05cm} x_{j} \; .
$$
\end{corollary}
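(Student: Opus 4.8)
The plan is to obtain the corollary as an immediate consequence of Lemma~\ref{lemma:fixedW}, viewing the minimization in (Rob-BP) as a partial (nested) minimization: an outer optimization over $w$ and, for each fixed $w$, an inner optimization over $(x,z)$. First I would record that the constraints of (Rob-BP) include $w_k \ge 0$ for all $k\in K$, so the $w$-component of any optimal solution satisfies $w^* \ge 0^{K}$; this is exactly the hypothesis under which Lemma~\ref{lemma:fixedW} applies, with $\bar w = w^*$.

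Next I would show that $Z^*$ coincides with the optimal value of (Rob-BP) restricted to the slice $w = w^*$. On the one hand, every feasible solution of that restricted problem is feasible for (Rob-BP), so its objective value is at least $Z^*$ (recall (Rob-BP) is a minimization problem); on the other hand, the triple $(x^*, z^*, w^*)$ itself lies in the slice $w = w^*$ and attains $Z^*$. Hence the optimal value of (Rob-BP) with $w$ frozen at $w^*$ equals $Z^*$.

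Finally I would invoke Lemma~\ref{lemma:fixedW} with $\bar w = w^*$: it states that solving (Rob-BP) with $w = w^*$ is equivalent to solving $(\,Rob\mbox{-}BP(w^*)\,)$, whose optimal value (including the additive constant $\sum_{k\in K}\theta_k w_k^*$ that Lemma~\ref{lemma:fixedW} carries along) is
$$
\sum_{k\in K} \theta_{k} \, w_{k}^* \;+\; \min_{x\in X} \sum_{j\in J} \bar{c}_{j}\, x_{j},
$$
with $\bar c_j = c_j + \bar d_j$ and $\bar d_j = \max\{0,\max_{k\in K}\{d_j^{k}-w_k^*\}\}$ as defined there. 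Chaining this with the identity of the previous paragraph gives $Z^* = \sum_{k\in K}\theta_k w_k^* + \min_{x\in X}\sum_{j\in J}\bar c_j x_j$, which is the claim.

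Since the statement is genuinely a corollary, I do not expect a real obstacle. The only points that deserve a careful line are: (i) checking the direction of the inequalities in the partial-minimization argument, given that (Rob-BP) minimizes; and (ii) confirming that $w^*\ge 0$ so that the hypothesis of Lemma~\ref{lemma:fixedW} is met — both of which are routine.
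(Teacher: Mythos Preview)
Your proposal is correct and follows essentially the same route as the paper: fix $w=w^*$ (using $w^*\ge 0^K$), observe that the optimal value on the slice $w=w^*$ equals $Z^*$ via the partial-minimization argument, and then invoke Lemma~\ref{lemma:fixedW} to rewrite that sliced optimum as $\sum_{k}\theta_k w_k^* + \min_{x\in X}\sum_j \bar c_j x_j$. The paper presents this as a single chain of equalities (using that $z_j=\bar d_j x_j$ at the inner optimum, from the proof of Lemma~\ref{lemma:fixedW}), but the content is identical.
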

\begin{proof}
Let ${\cal{F}}$ and ${\cal{F}}(\bar{w})$ be the sets of feasible solutions of (Rob-BP) and (Rob-BP($\bar{w}$)) for some fixed $\bar{w}\ge 0^K$, respectively. Then
\begin{eqnarray*}
  Z^* &=&
  \min_{(x,w,z) \in {\cal{F}}}
  \hspace{0.1cm}
    \sum_{j\in J}c_{j} \hspace{0.05cm} x_{j}
    + \sum_{k\in K} \theta_{k} \hspace{0.05cm} w_{k}+\sum_{j\in J} z_{j}
  \\
  &=&
  \sum_{j\in J}c_{j} \hspace{0.05cm} x_{j}^*
    + \sum_{k\in K} \theta_{k} \hspace{0.05cm} w_{k}^* +\sum_{j\in J} z_{j}^*
  \\
  &=&
    \sum_{k\in K} \theta_{k} \hspace{0.05cm} w_{k}^*
    +   \min_{(x,z) \in {\cal{F}}(w^*)} \hspace{0.1cm} \sum_{j\in J}c_{j} \hspace{0.05cm} x_{j} +\sum_{j\in J} z_{j}
  \\
  &\stackrel{(a)}{=}&
    \sum_{k\in K} \theta_{k} \hspace{0.05cm} w_{k}^*
    +   \min_{x \in X} \hspace{0.1cm} \sum_{j\in J} c_{j} \hspace{0.05cm} x_{j} +\sum_{j\in J} \bar{d}_{j} \hspace{0.05cm} x_{j}
  \\
  &=&
    \sum_{k\in K} \theta_{k} \hspace{0.05cm} w_{k}^*
    +
    \min_{x \in X} \sum_{j\in J} \bar{c}_{j} \hspace{0.05cm} x_{j}
\end{eqnarray*}
where the equality (a) holds since $z_{j} = \bar{d}_{j} x_{j}$ at the optimum, as showed in the proof of Lemma \ref{lemma:fixedW}.
\qed
\end{proof}

\noindent
Using the previous result,
a naive approach to solve (Rob-BP) would be to test the value of (Rob-BP($w$)) for every $w \geq 0^{K}$. However, this would require an exponential number of tests.
We prove instead that we can limit our attention to a subset of vectors $w$ of polynomial size, as indicated in the algorithm and theorem that follows.

\begin{algo}[H]
\label{algorithmRobBP}
\begin{lyxlist}{00.00.0000}
\item [{Input:}]
set of feasible solutions $X\subseteq\{0,1\}^{n}$;
\\
cost values $c_{j}\ge0$ for all $j\in J$;
\\
deviation values $0=d_{j}^{0}\le d_{j}^{1}<d_{j}^{2}<\ldots<d_{j}^{K^{+}}$
for all $j\in J$;
\\
lower and upper bounds $l_{k},u_{k}$ for all $k\in K$.
\\
\item [{Output:}] an optimal robust solution $x^{*} \in X$
\\
\item [{Step~1:}]
a) set $Z = \infty$, $(\bar{x},\bar{w},\bar{z}) = $ NULL
\\
b) define $B_{kk'} = \{d_{j}^{k} - d_{j}^{k'}, \forall j \in J\} \cup \{0\}$,
    $\forall k, k' \in K: k \neq k'$
\\
c) define $C_{k} = \{d_{j}^{k}, \forall j \in J\} \cup \{0\}$,
    $\forall k \in K$
\\
\item [{Step~2:}] FOR each possible combination of values $b_{kk'} \in B_{kk'}$, $c_k \in C_{k}$ compute a solution $w$ to the linear system
        \begin{eqnarray*}
        w_{k} - w_{k'} &\geq& b_{kk'}
        \hspace{1.00cm}
        k, k' \in K: k \neq k'
        \\
        w_{k} &\leq& c_{k}
        \hspace{1.25cm}
        k \in K
        \\
        w_{k} &\geq& 0
        \hspace{1.40cm}
        k \in K
        \end{eqnarray*}
IF a solution $w$ exists THEN
    \begin{description}
      \item a) solve (Rob-BP($w$)) and let $(x(w),w,z(w))$, $Z(w)$ be the corresponding optimal solution and value, respectively
      \item b) IF $\theta' w + Z(w) < Z$ THEN
            \begin{description}
              \item i) set $Z = \theta' w + Z(w)$ and $(\bar{x},\bar{w},\bar{z}) = (x(w),w,z(w))$
              \\
            \end{description}
    \end{description}
\item [{Step~3:}] return $(\bar{x},\bar{w},\bar{z})$
\end{lyxlist}
\caption{\label{alg:robust_solution}Computing an optimal robust solution of (Rob-BP)}
\end{algo}

\begin{theorem} \label{theorem:algComplexity}
Algorithm 1.1 computes a robust optimal solution to (Rob-BP) by solving at most $(n + 1)^{k^2}$ nominal problems
with modified cost coefficients
(Rob-BP($w$)).
If there exists an algorithm to solve the nominal problem
(BP) in polynomial time, Algorithm 1.1 has a polynomial run-time for a
constant number $K$ of deviation bands.
\end{theorem}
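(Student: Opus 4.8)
The plan is to handle a routine counting part first and then the substantive correctness part. For the count, observe (using Remark~\ref{reamrk:noBands}, so that $K\le n$) that Step~2 ranges over all ways of picking one value $b_{kk'}\in B_{kk'}$ for each of the at most $K(K-1)$ ordered pairs $k\neq k'$ and one value $c_k\in C_k$ for each of the $K$ bands; since $|B_{kk'}|\le n+1$ and $|C_k|\le n+1$, this is at most $(n+1)^{K(K-1)}\cdot(n+1)^{K}=(n+1)^{K^{2}}$ combinations, hence at most $(n+1)^{K^{2}}$ calls to (Rob-BP($w$)). Each iteration tests feasibility of a linear system in the $K$ variables $w$ with $O(K^{2})$ constraints (a polynomial LP) and, when it is feasible, forms the costs $\bar c_j=c_j+\bar d_j(w)\ge c_j\ge 0$ in time $O(nK)$ and solves (Rob-BP($w$)), which by Lemma~\ref{lemma:fixedW} is nothing but (BP) with cost vector $\bar c$ and hence belongs to the same problem class. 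Therefore, if (BP) admits a polynomial-time algorithm, the whole procedure runs in $(n+1)^{K^{2}}$ times a polynomial, which is polynomial for constant $K$.

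For correctness it suffices to show that among the vectors $w$ for which the algorithm actually solves (Rob-BP($w$)) there is one with $\theta' w+Z(w)=Z^{*}$, where $Z^{*}$ is the optimum of (Rob-BP): by Lemma~\ref{lemma:fixedW} each examined $w$ yields a feasible triple of (Rob-BP) of value $\theta' w+Z(w)\ge Z^{*}$, so the minimum recorded in Step~2b equals $Z^{*}$ and the returned $(\bar x,\bar w,\bar z)$ is optimal. By Corollary~\ref{corollary:fixedW}, $Z^{*}=\sum_{k\in K}\theta_k w^{*}_k+\min_{x\in X}\sum_{j\in J}\bar c_j(w^{*})x_j$ for any optimal $(x^{*},z^{*},w^{*})$ of (Rob-BP), so it is enough to exhibit one optimal $w^{*}$ that some combination of Step~2 reproduces. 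I would first note that, with $x=x^{*}$ fixed, the pair $(w^{*},z^{*})$ may be taken to be a vertex of the polyhedron $\{(w,z)\ge 0:\ w_k+z_j\ge d_j^{k}x^{*}_j,\ j\in J,\ k\in K\}$, since the LP over it has a finite optimum and, again by Corollary~\ref{corollary:fixedW}, its optimal face still delivers $Z^{*}$.

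The core step is a structural description of such a vertex. At $(w^{*},z^{*})$ there are $K+|\{j:x^{*}_j=1\}|$ linearly independent tight constraints, each of the form $w_k+z_j=d_j^{k}$, $w_k=0$, or $z_j=0$. Build the bipartite graph on band-nodes and element-nodes whose edges are the tight constraints $w_k+z_j=d_j^{k}$, and regard $w_k=0$ and $z_j=0$ as ``groundings''. Tracing alternating paths from the grounding in each connected component shows that: (a) two bands $k,k'$ adjacent to a common element-node $j$ satisfy $w^{*}_k-w^{*}_{k'}=d_j^{k}-d_j^{k'}\in B_{kk'}$; (b) every band-node is linked, through a chain of such difference relations, to a band-node pinned to $0$ or to some $d_j^{k}\in C_k$; and (c) these relations, together with $w\ge 0$, determine $w^{*}$. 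Choosing $c_k\in C_k$ and $b_{kk'}\in B_{kk'}$ to match the pinned values and the differences from (a)--(b) — and using $-(d_j^{k}-d_j^{k'})=d_j^{k'}-d_j^{k}\in B_{k'k}$, so that a pair of ``$\ge$'' inequalities encodes an equality — the associated combination of Step~2 produces a feasible linear system whose solution is exactly $w^{*}$. The algorithm then solves (Rob-BP($w^{*}$)) and records the value $\theta' w^{*}+Z(w^{*})=Z^{*}$, which is what was needed.

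The step I expect to be the main obstacle is item (c) together with the final reduction: one must verify that the template of Step~2 — each variable bounded only from above, each pairwise difference bounded only from below, plus $w\ge 0$ — genuinely pins $w^{*}$ down, even though at a vertex the ``lower side'' of a band value typically comes from a tight constraint $z_j=0$ that the template cannot impose directly. Making this rigorous requires propagating the pinning along the difference chains, starting from the bands grounded to $0$ or to a $C_k$-value, and checking that no slack survives in the relevant coordinate directions; and if some direction of slack does survive, one falls back on a monotonicity argument showing that $\theta' w+Z(w)$ cannot decrease along it, so that whichever solution the LP solver returns in Step~2 is still optimal. The remaining checks — feasibility of the returned triple $(x(w),w,z(w))$, non-negativity of $\bar c$, and termination — are routine.
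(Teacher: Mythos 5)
Your counting argument and the per-iteration complexity analysis (each examined combination requires solving one small LP in $w$ and, via Lemma~\ref{lemma:fixedW}, one nominal problem with modified non-negative costs) are fine and agree with the paper. The gap is in the correctness part, and it is exactly the step you flag yourself. What must be exhibited is a combination $(b_{kk'},c_k)$ enumerated in Step~2 such that \emph{every} feasible solution $w$ of $LP(b_{kk'},c_k)$ satisfies $\theta' w + Z(w) = Z^*$, because the algorithm returns an arbitrary feasible point of that system, not a point of your choosing. Your construction reads the right-hand sides off the tight constraints at one optimal vertex of $\{(w,z)\ge 0:\ w_k+z_j\ge d_j^k x_j^*\}$. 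That only reproduces the inequalities binding at that vertex; in particular, the lower bounds on $w_k$ that arise from tight pairs $z_j=0$ and $w_k+z_j=d_j^k x_j^*$ cannot be expressed in the template at all (it only has $w_k\le c_k$, $w_k-w_{k'}\ge b_{kk'}$, $w\ge 0$), and the non-tight constraints are dropped. Hence the feasible set of your system can strictly contain the set of optimal $w$'s, Step~2 may return a non-optimal $w$, and then $\theta' w + Z(w) > Z^*$. Your fallback ``monotonicity'' argument is both unproved and oriented the wrong way: since $\theta' w + Z(w)\ge Z^*$ always holds, what is needed is that the value cannot \emph{increase} above $Z^*$ anywhere on the feasible set of the chosen combination, i.e.\ that this set is contained in the optimal set.

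The paper closes precisely this point by a different device: fix $x^*$, take a maximum-deviation scenario with band partition $J_k$, note that the inner LP in $(w,z)$ has the deviation problem (DEV01) as its dual, and apply complementary slackness with an integral optimal dual $y^*$. This characterizes the \emph{entire} optimal set: $z_j=d_j^k x_j^*-w_k$ for $j\in J_k$, $w_k\le\min_{j\in J_k} d_j^k x_j^*$, and $w_k-w_{k'}\ge\max_{j\in J_{k'}}\,(d_j^k-d_j^{k'})\,x_j^*$. Because $x^*$ is binary, these right-hand sides are either $0$ or of the form $d_j^k$, respectively $d_j^k-d_j^{k'}$, so they belong to $C_k$ and $B_{kk'}$ and this combination is among those enumerated; and since its feasible set \emph{equals} the optimal set, whichever $w$ Step~2 computes for it gives $\theta' w + Z(w) = Z^*$, which together with your (correct) observation that every examined combination yields a value $\ge Z^*$ finishes the proof. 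If you wish to keep your vertex-and-chain route, you must take the min/max over the whole classes $J_k$ (i.e., keep the non-binding constraints as well), which in effect reconstructs the complementary-slackness system; the tight-constraint graph at a single vertex alone does not suffice. (Two minor slips: a vertex of the polyhedron in $(w,z)$ requires $K+n$ tight constraints, not $K+|\{j:x_j^*=1\}|$, and the difference relations should carry the factor $x_j^*$, i.e.\ $w_k^*-w_{k'}^*=(d_j^k-d_j^{k'})x_j^*$, which is still an element of $B_{kk'}$ since $0\in B_{kk'}$.)
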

\begin{proof}
Let $x^*$ be an optimal robust solution and let $S$ be a scenario in $\mathcal{S}_M$ such that the deviation on $x^*$ is maximized. We then define a partition of $J$ according to the bands in which every deviation value $d^S(j)$ falls, i.e. $J_k=\{j \in J : d^S_j \in (d^{k-1}_j, d^k_j]\}$
(this is a partition as every coefficient deviates in exactly one band).

\bigskip

\noindent
\emph{Claim 1.} Let $x^{*}$ be a robust optimal solution. Then there exists $(w^{*}, z^{*})$ such that $(x^{*}, w^{*}, z^{*})$ is an optimal solution of (Rob-BP) and $w^{*}$ is a feasible solution of the following system $LP(b_{kk'},c_{k})$:
\begin{eqnarray*}
w_{k}^{*} - w_{k'}^{*} &\geq& b_{kk'}
\hspace{1.00cm}
k, k' \in K: k \neq k'
\hspace{0.75cm}  (LP(b_{kk'},c_{k}))
\\
w^{*}_{k} &\leq& c_{k}
\hspace{1.25cm}
k \in K
\\
w^{*}_{k} &\geq& 0
\hspace{1.40cm}
k \in K
\end{eqnarray*}

\noindent
with $c_k = \min_{j \in J_k} \{d_{j}^{k} \hspace{0.05cm} x_{j}^{*}\}$, for all $ k \in K$
and $b_{kk'} = \max_{j \in J_{k'}} \{d_{j}^{k} \hspace{0.05cm} x_{j}^{*} - d_{j}^{k'} \hspace{0.05cm} x_{j}^{*} \}$, for all $k, k' \in K: k \neq k'$. Moreover, $z^{*}_{j} = \hspace{0.1cm} d_{j}^{k} \hspace{0.05cm} x_{j}^{*} - w_{k}^{*}$,  for all $k \in K, j \in J_k$.

\bigskip
\noindent
To prove the claim, we first note that since $x^{*}$ is a robust optimal solution, the corresponding values $w^{*}, z^{*}$ minimize the following linear program obtained from (Rob-BP) for $x = x^{*}$:
\begin{eqnarray}
\min
&&
\sum_{k\in K} \theta_{k} \hspace{0.05cm} w_{k}+\sum_{j\in J} z_{j}
\nonumber
\\
&&
w_{k}+z_{j}\ge d_{j}^{k} \hspace{0.05cm} x_{j}^{*}
\hspace{1.9cm}
j\in J, k\in K
\nonumber
\\
&&
w_{k} \geq 0
\hspace{3.2cm}
k\in K
\nonumber
\\
&&
z_{j} \geq 0
\hspace{3.3cm}
j\in J \; ,
\nonumber
\end{eqnarray}

\noindent
Its dual problem is:
\begin{eqnarray}
\max
&&
\sum_{j \in J} \sum_{k \in K} d_j^k \hspace{0.05cm} x^{*}_j \hspace{0.1cm} y_{jk}
\nonumber
\\
&&
\sum_{j \in J} y_{jk} \leq \theta_k
\hspace{2.3cm}
k\in K
\nonumber
\\
&&
\sum_{j \in J} y_{jk} \leq 1
\hspace{2.4cm}
j \in J
\nonumber
\\
&&
y_{jk} \geq 0
\hspace{2.9cm}
j\in J, k\in K \; .
\nonumber
\end{eqnarray}

\noindent
This is exactly the maximum deviation problem DEV01 (see Section \ref{sec:compactMB})
defined for (BP) subject to cost deviation. So an optimal solution $y^{*}$
represents a maximum deviation scenario for $x^{*}$ and we have
$j \in J_k \Longleftrightarrow y^{*}_{jk} = 1$, for all $k\in K$.

Consider now a dual optimal binary solution $y^{*}$ and a primal feasible solution $(\bar{w},\bar{z})$.
By the well-known complementary slackness conditions, $(\bar{w},\bar{z})$ is optimal if and only if:
\begin{eqnarray}
y^{*}_{jk} \hspace{0.1cm}
(\bar{w}_{k} + \bar{z}_{j} - d_{j}^{k} \hspace{0.05cm} x_{j}^{*})
\hspace{0.1cm} &=& \hspace{0.1cm} 0
\hspace{1.5cm}
j\in J, k\in K
\nonumber
\\
\bar{w}_k \hspace{0.1cm}
\left(\theta_k - \sum_{j\in J} y_{kj}^{*}\right)
\hspace{0.1cm} &=& \hspace{0.1cm} 0
\hspace{1.5cm} k \in K
\nonumber
\\
\bar{z}_j \hspace{0.1cm}
\left(1 - \sum_{k \in K} y_{kj}^{*}\right)
\hspace{0.1cm} &=& \hspace{0.1cm} 0
\hspace{1.5cm} j \in J \; .
\nonumber
\end{eqnarray}

\noindent
Replacing $(\bar{w},\bar{z})$ with the inequalities defining any feasible solution $(w,z)$
we obtain:
\begin{eqnarray}
y^{*}_{jk} \hspace{0.1cm}
(w_{k} + z_{j} - d_{j}^{k} \hspace{0.05cm} x_{j}^{*})
\hspace{0.1cm} &=& \hspace{0.1cm} 0
\hspace{2.0cm}
j\in J, k\in K
\label{ineq_complSlack1}
\\
w_k \hspace{0.1cm}
\left(\theta_k - \sum_{j\in J} y_{kj}^{*}\right)
\hspace{0.1cm} &=& \hspace{0.1cm} 0
\hspace{2.0cm} k \in K
\label{ineq_complSlack2}
\\
z_j \hspace{0.1cm}
\left(1 - \sum_{k \in K} y_{kj}^{*}\right)
\hspace{0.1cm} &=& \hspace{0.1cm} 0
\hspace{2.0cm} j \in J
\label{ineq_complSlack3}
\\
w_{k}+z_{j}
\hspace{0.1cm} &\geq& \hspace{0.1cm}
d_{j}^{k} \hspace{0.05cm} x_{j}^{*}
\hspace{1.4cm}
j\in J, k\in K
\label{ineq_complSlack4}
\\
w_{k}
\hspace{0.1cm} &\geq& \hspace{0.1cm} 0
\hspace{2.0cm}
k\in K
\label{ineq_complSlack5}
\\
z_{j}
\hspace{0.1cm} &\geq& \hspace{0.1cm} 0
\hspace{2.0cm}
j\in J
\label{ineq_complSlack6}
\end{eqnarray}

\noindent
W.l.o.g. we can assume that the optimal solution $y^{*}$ is such that $\sum_{j \in J} y_{jk}^{*} = \theta_k$ and $\sum_{k \in K} y_{jk}^{*} = 1$. So we just need to consider (in)equalities (\ref{ineq_complSlack1}) and (\ref{ineq_complSlack4}-\ref{ineq_complSlack6}), as the remaining equalities are already satisfied by the choice of $y^{*}$. Hence, $(w,z)$ is optimal if and only if:
\begin{eqnarray}
w_{k} + z_{j}
\hspace{0.1cm} &=& \hspace{0.1cm}
d_{j}^{k} \hspace{0.05cm} x_{j}^{*}
\hspace{2.0cm}
k\in K, j \in J_k
\label{ineq_reducedComplSlack1}
\\
w_{k} + z_{j}
\hspace{0.1cm} &\geq& \hspace{0.1cm}
d_{j}^{k} \hspace{0.05cm} x_{j}^{*}
\hspace{2.0cm}
k\in K, j \in J \backslash J_k
\label{ineq_reducedComplSlack2}
\\
w_{k}
\hspace{0.1cm} &\geq& \hspace{0.1cm} 0
\hspace{2.55cm}
k\in K
\nonumber
\\
z_{j}
\hspace{0.1cm} &\geq& \hspace{0.1cm} 0
\hspace{2.55cm}
j\in J \; .
\nonumber
\end{eqnarray}

\noindent
Note that (\ref{ineq_reducedComplSlack1}),(\ref{ineq_reducedComplSlack2}) derive from (\ref{ineq_complSlack1}),(\ref{ineq_complSlack4}) as $y_{jk}^{*} = 1$ for all $k \in K, j \in J_k$.
\\
Since the sets $J_k$, $k \in K$ form a partition of $J$, by (\ref{ineq_reducedComplSlack1}) we can set $z_{j} = \hspace{0.1cm} d_{j}^{k} \hspace{0.05cm} x_{j}^{*} - w_{k}$ for all  $ k\in K, j \in J_k$. 
Yet, we must guarantee that $z_{j} \geq 0$ and so we must have $w_{k} \leq d_{j}^{k} \hspace{0.05cm} x_{j}^{*}$, for all $k\in K, j \in J_k$, which is equivalent to $w_{k} \leq \min_{j \in J_k} d_{j}^{k} \hspace{0.05cm} x_{j}^{*}$, for all $ k\in K$.
%

\noindent
Summarizing, for all $ k\in K$ we have the (in)equalities:
\begin{eqnarray}
&&
w_{k} \leq \min_{j \in J_k^{*}} d_{j}^{k} \hspace{0.05cm} x_{j}^{*}
\label{settingSetW_eq1}
\\
&&
z_{j}
= \hspace{0.1cm} d_{j}^{k} \hspace{0.05cm} x_{j}^{*} - w_{k}
\hspace{1.0cm}
j \in J_k \; .
\label{settingSetW_eq2}
\end{eqnarray}

\noindent
Consider now an inequality (\ref{ineq_reducedComplSlack2}) and denote by
$k'$ the deviation band of coefficient $j$ according to $x^{*}$ (i.e., $j \in J_{k'}$).
We can use equality (\ref{settingSetW_eq2}) in (\ref{ineq_reducedComplSlack2}) obtaining
$w_{k} + (d_{j}^{k'} \hspace{0.05cm} x_{j}^{*} - w_{k'}) \geq d_{j}^{k} \hspace{0.05cm} x_{j}^{*}$ and thus:
$$
w_{k} - w_{k'} \geq  d_{j}^{k} \hspace{0.05cm} x_{j}^{*} - d_{j}^{k'} \hspace{0.05cm} x_{j}^{*}
$$
We can repeat this reasoning for every inequality (\ref{ineq_reducedComplSlack2}) and reorganize the indices of the constraints obtaining the system:
$$
w_{k} - w_{k'} \geq  d_{j}^{k} \hspace{0.05cm} x_{j}^{*} - d_{j}^{k'} \hspace{0.05cm} x_{j}^{*}
\hspace{1.0cm}
k, k' \in K: k \neq k', \hspace{0.2cm} j \in J_{k'}
$$
which is equivalent to
$$
w_{k} - w_{k'} \geq
\max_{j \in J_{k'}} \{d_{j}^{k} \hspace{0.05cm} x_{j}^{*} - d_{j}^{k'} \hspace{0.05cm} x_{j}^{*}\}
\hspace{1.0cm}
k, k' \in K: k \neq k'
$$

\noindent
In total  we obtain: given a robust optimal solution $x^{*}$, $(w,z)$ is robust optimal as well if and only if:
\begin{eqnarray*}
w_{k} - w_{k'} &\geq&
\max_{j \in J_{k'}} \{d_{j}^{k} \hspace{0.05cm} x_{j}^{*} - d_{j}^{k'} \hspace{0.05cm} x_{j}^{*}\}
\hspace{1.75cm}
k, k' \in K: k \neq k'
\\
w_{k} &\leq& \min_{j \in J_k} d_{j}^{k} \hspace{0.05cm} x_{j}^{*}
\hspace{3.40cm}
k \in K
\\
w_{k} &\geq& 0
\hspace{4.70cm}
k \in K
\\
z_{j}
&=& \hspace{0.1cm} d_{j}^{k} \hspace{0.05cm} x_{j}^{*} - w_{k}
\hspace{3.15cm}
k \in K, j \in J_k \; .
\end{eqnarray*}
%
%
thus proving Claim 1.

\bigskip

\noindent
A crucial consequence of the proved Claim 1 is that we can compute an optimal robust solution $x_{j}^{*}$ to (ROB-BP) by solving $LP(b_{kk'},c_{k})$ for all possible combinations of values $b_{kk'}, c_{k}$:
\begin{eqnarray*}
b_{kk'} \in B_{kk'}
&=&
    \{d_{j}^{k} - d_{j}^{k'}, \forall j \in J\}
    \cup \{0\}
    \hspace{1.1cm}
\forall k,k' \in K: k \neq k'
\\
c_k \in C_k
&=&
    \{d_{j}^{k}, \hspace{0.1cm} \forall j \in J\}
    \cup \{0\}
    \hspace{1.9cm}
\forall k \in K
\end{eqnarray*}
Indeed
\begin{enumerate}
  \item if $\bar{w}$ is a feasible solution to $LP(b_{kk'},c_{k})$, then by Lemma \ref{lemma:fixedW} we solve (ROB-BP($\bar{w}$)) and obtain the optimal value Z($\bar{w}$) and an optimal solution $x(\bar{w})$ of (ROB-BP) for fixed $w = \bar{w}$;
  \\
  \item let $w(b_{kk'},c_{k})$ be a feasible solution to $LP(b_{kk'},c_{k})$. If $x(w(b_{kk'},c_{k}))$ corresponds with the smallest value $Z(w(b_{kk'},c_{k}))$ among all the combinations $b_{kk'} \in B_{kk'}, c_k \in C_k$, then $x(w(b_{kk'},c_{k}))$ is a robust optimal solution to (ROB-BP).
\end{enumerate}

\noindent
Algorithm 1.1 formally translates these observations by looking for the combination $b_{kk'} \in B_{kk'},c_k \in C_k$ that is associated with the smallest value $Z(w(b_{kk'},c_{k}))$.
Since the number of combinations is equal to $(n+1)^{{k}^{2}}$, we must solve at most $(n+1)^{{k}^{2}}$ problem (ROB-BP($w$)) (the problem must not be solved when the corresponding $LP(\cdot)$ does not admit a feasible solution $w$).

Note that the tractability of the entire algorithm depends on the tractability of the nominal optimization problem (BP). If there exists a polynomial algorithm for (BP), then Algorithm 1.1 is polynomial as well.

This completes the proof.
\
\qed
\end{proof}

\begin{remark} \label{reamrk:noBands}
We note that Algorithm 1.1 can be transferred to a robust $\alpha$- approximation
algorithm if instead of solving (ROB-BP($w$)) to optimality we compute
an $\alpha$-approximation of the nominal problem.
\end{remark}

%
%

\section{Defining a probability bound of constraint violation
\label{sec:prob_bound}}

In this section, we analyze the probability that a robust optimal solution $x^{*}$ becomes infeasible: $x^{*}$ is indeed completely protected against deviations captured by the multi-band scenario set, but it may still become infeasible under deviations that lie outside the set.
In contrast to the bounds defined in \cite{BeNe00} and \cite{BeSi04}, which use a priori information about the not completely known distribution of the deviations (specifically, they assume symmetric distributions with symmetric range), we define a bound that exploits historical data available on the uncertain coefficients. Our basic assumption is that for each coefficient $a_{ij}$, we have $W > 0$ samples $a_{ij}^{1}, a_{ij}^{2}, \ldots, a_{ij}^{W}$, with each sample lying in the overall deviation range, i.e. $a_{ij}^{\sigma} \in [\bar{a}_{ij} - d_{ij}^{K-}, \bar{a}_{ij} + d_{ij}^{K+}]$, $\sigma = 1, \ldots, W$. This is a very reasonable assumption in real-world problems, where data about the past behavior of the uncertainty are commonly available. Through this samples, we can define the \emph{sample mean} $\mu_{ij} = 1/W \sum_{\sigma = 1}^{W} a_{ij}^{\sigma}$ of $a_{ij}$ and define a new typology of probabilistic bound, as formalized in the proposition that follows. We note that our bound holds with a bounded probability, since it is defined using the well-known Hoeffding's Inequality \cite{Ho63}: Hoeffding's Inequality bounds the probability that the difference between the sample mean and the actual mean of a distribution exceeds a fixed threshold.

\begin{proposition}
Consider the robust optimization problem (Rob-MILP) under a multi-band uncertainty set defined as in Section \ref{sec:intro} and let $x^{*}$ be a robust optimal solution to (Rob-MILP). Suppose that the uncertain coefficients $a_{ij}$ are independent random variables and that for each  $a_{ij}$, a number $W > 0$ of samples $a_{ij}^{\sigma} \in [\bar{a}_{ij} - d_{ij}^{K-}, \bar{a}_{ij} + d_{ij}^{K+}]$, with $\sigma = 1, \ldots, W$ is available and denote by $\mu_{ij}$ the corresponding sample mean.
\\
Given a constraint $i \in I$ and a constant $t \geq 0$, the probability $P^{V}_i(x^{*})$ that the robust optimal solution $x^{*}$ violates $i$ (i.e., $P^{V}_i(x^{*}) = P [a_i' x^{*} > b_i])$ is such that:
\begin{equation}\label{proposition_probBound}
P^{V}_i(x^{*})
\leq
\exp \left(
- t \hspace{0.05cm} b_i
\hspace{0.1cm}
+ \sum_{j \in J} \hspace{0.05cm} \ln \hspace{0.05cm} B_{ij}[t,x^{*}]
\right)
\; ,
\end{equation}
where $
B_{ij}[t,x^{*}]
=
\frac{1}{d_{ij}^{K+} + d_{ij}^{K-}}
\hspace{0.1cm}
\cdot
$
\begin{eqnarray*}
&&
\cdot
\left\{
    \left[
        \bar{a}_{ij} + d_{ij}^{K+}
        - \left(
            \mu_{ij} + x_j^{*} \left(d_{ij}^{K+} + d_{ij}^{K-}\right) \sqrt{\frac{1}{2 W \ln \beta_{ij}}}
            \hspace{0.05cm}
        \right)
    \right]
        \exp \left(
                t \hspace{0.05cm} x_j^{*} (\bar{a}_{ij} - d_{ij}^{K-})
             \right)
     -
\right.
\\
&&
\\
&&
\left.
-
    \left[
        \bar{a}_{ij} - d_{ij}^{K-}
        - \left(
            \mu_{ij} + x_j^{*} \left(d_{ij}^{K+} + d_{ij}^{K-}\right) \sqrt{\frac{1}{2 W \ln \beta_{ij}}}
        \right)
    \right]
        \exp \left(
                t \hspace{0.05cm} x_j^{*} (\bar{a}_{ij} + d_{ij}^{K+})
             \right)
\right\} \;.
%
%
%
%
\end{eqnarray*}

\noindent
Moreover, for fixed $\tau > 0$, the bound (\ref{proposition_probBound}) holds with probability greater than or equal to
$$
\prod_{j \in j} (1 - \beta_{ij})
=
\prod_{j \in j} \left[ 1 -
\exp \left(
        -\frac{2 \hspace{0.05cm} \tau^{2} \hspace{0.05cm} W}{x_j^{*^{4}} (d_{ij}^{K+} + d_{ij}^{K-})^{2}}
    \right) \right] \; .
$$
\end{proposition}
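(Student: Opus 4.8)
The natural route is an exponential (Chernoff--Markov) argument combined with the boundedness of the coefficients and Hoeffding's inequality, the latter serving to replace the unknown true means by data-driven upper confidence values. First I would fix the constraint $i\in I$ and the constant $t\geq 0$ and apply Markov's inequality to the nonnegative random variable $\exp\!\big(t\sum_{j\in J}a_{ij}x_j^{*}\big)$:
\[
P_i^{V}(x^{*}) = P\Big[\textstyle\sum_{j\in J}a_{ij}x_j^{*}>b_i\Big] = P\big[e^{t a_i'x^{*}}>e^{t b_i}\big]\leq e^{-t b_i}\,E\big[e^{t a_i'x^{*}}\big].
\]
Since the $a_{ij}$ are independent, $E[e^{t a_i'x^{*}}]=\prod_{j\in J}E[e^{t a_{ij}x_j^{*}}]$, and taking logarithms already yields the $\exp\!\big(-t b_i+\sum_{j}\ln(\cdot)\big)$ shape of (\ref{proposition_probBound}); it then remains to show $E[e^{t a_{ij}x_j^{*}}]\leq B_{ij}[t,x^{*}]$ for each $j$, with the stated probability.

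Second, I would bound each factor by convexity. The map $z\mapsto e^{t x_j^{*} z}$ is convex, and $a_{ij}$ takes values in the interval $[\bar a_{ij}-d_{ij}^{K-},\,\bar a_{ij}+d_{ij}^{K+}]$ of length $R_{ij}:=d_{ij}^{K+}+d_{ij}^{K-}$, so $e^{t x_j^{*} a_{ij}}$ is at most the affine function interpolating its two endpoint values; taking expectations gives
\[
E[e^{t x_j^{*} a_{ij}}]\leq \frac{1}{R_{ij}}\Big[(\bar a_{ij}+d_{ij}^{K+}-E[a_{ij}])\,e^{t x_j^{*}(\bar a_{ij}-d_{ij}^{K-})}+(E[a_{ij}]-\bar a_{ij}+d_{ij}^{K-})\,e^{t x_j^{*}(\bar a_{ij}+d_{ij}^{K+})}\Big],
\]
which is exactly the expression $B_{ij}[t,x^{*}]$ but with $E[a_{ij}]$ in place of the sample-based quantity appearing in the statement. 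This right-hand side is nondecreasing in $E[a_{ij}]$, since its derivative in $E[a_{ij}]$ equals $\big(e^{t x_j^{*}(\bar a_{ij}+d_{ij}^{K+})}-e^{t x_j^{*}(\bar a_{ij}-d_{ij}^{K-})}\big)/R_{ij}\geq 0$ for $t\geq 0$ and $x_j^{*}\geq 0$. Hence it suffices to over-estimate $E[a_{ij}]$ by the displaced sample mean $\mu_{ij}+x_j^{*}R_{ij}\sqrt{1/(2W\ln\beta_{ij})}$ that is substituted for $E[a_{ij}]$ inside $B_{ij}[t,x^{*}]$.

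Third, this over-estimation is where Hoeffding's inequality enters. The $W$ samples $a_{ij}^{\sigma}$ are i.i.d.\ copies of $a_{ij}$ lying in an interval of length $R_{ij}$, so the one-sided Hoeffding bound gives $P[\,E[a_{ij}]-\mu_{ij}>s\,]\leq\exp\!\big(-2W s^{2}/R_{ij}^{2}\big)$; choosing the radius $s$ so that this upper tail equals $\beta_{ij}$ yields, with probability at least $1-\beta_{ij}$, the inequality $E[a_{ij}]\leq\mu_{ij}+s$, i.e.\ exactly the displacement built into $B_{ij}$ (reading the relation $\beta_{ij}=\exp(\cdot)$ in terms of $\tau$ the other way around). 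Combining the per-coordinate events, which are independent since they involve disjoint sets of samples, the substitution is simultaneously valid for all $j\in J$ with probability at least $\prod_{j\in J}(1-\beta_{ij})$; on that event the chain of inequalities from the first two steps delivers (\ref{proposition_probBound}). I expect the main obstacle to be this third step: setting up the one-sided Hoeffding estimate with the correct range and confidence radius, verifying the monotonicity of $B_{ij}[t,x^{*}]$ in the mean so that replacing $E[a_{ij}]$ by the upper confidence value genuinely preserves the inequality, and carefully bookkeeping the joint success probability $\prod_{j}(1-\beta_{ij})$ through the independence of the samples across coordinates.
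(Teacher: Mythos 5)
Your proposal follows essentially the same route as the paper's proof: Markov's inequality applied to $e^{t\,a_i'x^{*}}$ together with independence of the $a_{ij}$, the endpoint-convexity (Hoeffding-lemma type) bound on each factor $E[e^{t x_j^{*} a_{ij}}]$ over the range $[\bar a_{ij}-d_{ij}^{K-},\,\bar a_{ij}+d_{ij}^{K+}]$, and Hoeffding's inequality on the $W$ samples to replace the unknown mean $E[a_{ij}]$ by an upper confidence value, valid simultaneously for all $j$ with probability at least $\prod_{j\in J}(1-\beta_{ij})$. You even make explicit a point the paper leaves implicit: that the convexity bound is nondecreasing in $E[a_{ij}]$ for $t\ge 0$ and $x_j^{*}\ge 0$, which is precisely what justifies substituting the upper confidence value.

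The only divergence is in the bookkeeping of the Hoeffding step. The paper applies the inequality to the scaled event $x_j^{*}E[a_{ij}]-x_j^{*}\mu_{ij}\ge\tau$ while using the scaled range $x_j^{*}(d_{ij}^{K+}+d_{ij}^{K-})$, which is how the factor $x_j^{*4}$ enters its definition of $\beta_{ij}$ and the factor $x_j^{*}$ enters the displacement inside $B_{ij}[t,x^{*}]$. Your direct one-sided Hoeffding bound on the unscaled samples gives the radius $(d_{ij}^{K+}+d_{ij}^{K-})\sqrt{\ln(1/\beta_{ij})/(2W)}$, which does not literally coincide with the stated displacement $x_j^{*}(d_{ij}^{K+}+d_{ij}^{K-})\sqrt{1/(2W\ln\beta_{ij})}$ nor with the stated formula for $\beta_{ij}$; note, however, that the statement's own expression contains a sign slip (for $\beta_{ij}<1$ one has $\ln\beta_{ij}<0$ under the square root), so a verbatim match is not attainable in any case. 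Your version yields a bound of exactly the intended form with a cleaner (and correctly scaled) confidence radius; to reproduce the paper's constants exactly one would have to mimic its scaling by $x_j^{*}$ in the Hoeffding application.
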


\smallskip

\begin{proof}
In order to prove the result, we will use the well-known Markov's Inequality and Hoeffding's Inequality \cite{Ho63}.
Given $x^{*}$ and $t > 0$, the probability of violation of constraint $i$ is:
\begin{eqnarray}
\label{1stStepBound}
P^{V}(x^{*})
\hspace{0.1cm}
=
\hspace{0.1cm}
P
  \left[
  \sum_{j \in J} a_{ij} \hspace{0.1cm} x_{j}^{*} > b_i
  \right]
\hspace{0.1cm}
\stackrel{(1b)}{\leq}
\hspace{0.1cm}
\exp \left(
        - t \hspace{0.05cm} b_i
     \right)
\hspace{0.1cm}
\prod_{j \in J}
     E \left[
        \exp \left(
                t \hspace{0.05cm} a_{ij} \hspace{0.05cm} x_{j}^{*}
            \right)
     \right] \; ,
\end{eqnarray}

\noindent
where inequality (1b) derives from Markov's Inequality and the independence of $a_{ij}$.
We are then interested in bounding the moment generating function $E \left[
        \exp \left(
                t \hspace{0.05cm} a_{ij} \hspace{0.05cm} x_{j}^{*}
            \right)
     \right]$ in the r.h.s.~of (1b), using the available samples $a_{ij}^{\sigma}$, $\sigma = 1, \ldots, W$.
To this end, we can consider the following formula, that exploits the convexity of the exponential function and holds for given $\gamma > 0$ for a bounded random variable $V: l \leq V \leq u$ \cite{Ho63}:
\begin{eqnarray}
\label{probBound_boundMoment}
E \left[
    e^{\gamma \hspace{0.05cm} V}
  \right]
\hspace{0.1cm}
&\leq&
\frac{1}{u - l}
\hspace{0.1cm}
\left[
\hspace{0.1cm}
\left(
u - E[V] \hspace{0.05cm}
\right)
\hspace{0.1cm}
e^{\gamma \hspace{0.05cm} l}
\hspace{0.1cm}
+
\hspace{0.05cm}
\left(
E[V] - l
\right)
\hspace{0.1cm}
e^{\gamma \hspace{0.05cm} u}
\hspace{0.1cm}
\right]
    \; .
\end{eqnarray}
To use this formula, since we do not know the real mean $E[a_{ij}]$, in the following passages we derive a bound on $E[a_{ij}]$.

As first step, for each coefficient $a_{ij}$, we use Hoeffding's Inequality to bound the probability that the difference between the sample mean and the actual mean is above a value $\tau > 0$, namely:
\begin{eqnarray*}
P
  \left[
    x_{j}^{*} E[a_{ij}] - x_{j}^{*} \mu_{ij} \geq \tau
  \right]
&=&
P
  \left[
    E[a_{ij}] - \mu_{ij} \geq \frac{\tau}{x_{j}^{*}}
  \right]
  \hspace{3.9cm} \leq
  \\
&\leq&
\exp \left(
        -\frac{2 \hspace{0.05cm} \left(\frac{\tau}{x_j^{*}}\right)^{2} \hspace{0.05cm} W^{2}}
        {W \left[
                x_j^{*} (\bar{a}_{ij} + d_{ij}^{K+}) - x_j^{*} (\bar{a}_{ij} - d_{ij}^{K-})
           \right]^{2}}
    \right)
    \hspace{0.2cm} =
\\
&=&
\exp \left(
        -\frac{2 \hspace{0.05cm} \tau^{2} \hspace{0.05cm} W}
        {x_j^{*^{4}} (d_{ij}^{K+} + d_{ij}^{K-})^{2}}
    \right)
    \; .
\end{eqnarray*}

\noindent
By letting $\beta_{ij} =\exp \left(
        -\frac{2 \hspace{0.05cm} \tau^{2} \hspace{0.05cm} W}
        {x_j^{*^{4}} (d_{ij}^{K+} + d_{ij}^{K-})^{2}}
    \right)$
and by reorganizing the equality through simple algebra operations we get
%
$
\tau =
x_j^{*^{2}}
(d_{ij}^{K+} + d_{ij}^{K-})
\sqrt{\frac{1}{2 W \ln \beta_{ij}}}
$
%
%
and finally:
\begin{eqnarray}
\label{probBound_boundMean1}
P
  \left[
    E[a_{ij}] - \mu_{ij}
    \hspace{0.1cm}
    \geq
    \hspace{0.1cm}
    x_j^{*}
    \hspace{0.1cm}
        (d_{ij}^{K+} + d_{ij}^{K-})
        \hspace{0.1cm}
        \sqrt{\frac{1}{2 W \ln \beta_{ij}}}
    \hspace{0.2cm}
  \right]
  \hspace{0.2cm}
  \leq
  \hspace{0.2cm}
  \beta_{ij}
    \; .
\end{eqnarray}

\noindent
From inequality (\ref{probBound_boundMean1}), we can derive the following bound for the (unknown) actual mean of $a_{ij}$:
\begin{eqnarray}
\label{probBound_boundMean2}
    E[a_{ij}]
    \hspace{0.1cm}
    \leq
    \mu_{ij} +
    \hspace{0.1cm}
    x_j^{*}
    \hspace{0.1cm}
        (d_{ij}^{K+} + d_{ij}^{K-})
        \hspace{0.1cm}
        \sqrt{\frac{1}{2 W \ln \beta_{ij}}}
    \; .
\end{eqnarray}
that holds with probability at least $1 - \beta_{ij}$.

We can then use the bound (\ref{probBound_boundMean2}) on the actual mean in the bound (\ref{probBound_boundMoment}) on the moment generating function, to finally reach our objective, namely defining a bound on
$E \left[
        \exp \left(
                t \hspace{0.05cm} a_{ij} \hspace{0.05cm} x_{j}^{*}
            \right)
\right]
$. In our case, the adaptation of (\ref{probBound_boundMoment}) is:
\begin{eqnarray}
E \left[
        \exp \left(
                t \hspace{0.05cm} a_{ij} \hspace{0.05cm} x_{j}^{*}
            \right)
\right]
&\leq&
\frac{1}{(d_{ij}^{K+} + d_{ij}^{K-})}
\cdot
\nonumber
\\
&&
\cdot
\left\{
    \left(
        \bar{a}_{ij} + d_{ij}^{K+}
        - E[a_{ij}]
    \right)
    \hspace{0.05cm}
        \exp \left(
                t \hspace{0.05cm} x_j^{*} (\bar{a}_{ij} - d_{ij}^{K-})
             \right)
     -
\right.
\nonumber
\\
&&
\left.
-
    \left(
        \bar{a}_{ij} - d_{ij}^{K-} - E[a_{ij}]
    \right)
    \hspace{0.05cm}
        \exp \left(
                t \hspace{0.05cm} x_j^{*} (\bar{a}_{ij} + d_{ij}^{K+})
             \right)
\right\} .
\label{probBound_finalBound}
\end{eqnarray}

\noindent
where we note that we have not yet substituted $E[a_{ij}]$.

Finally, we substitute the bound (\ref{probBound_finalBound}) in (\ref{1stStepBound}), thus obtaining the bound $B_{ij}[t,x^{*}]$ of the statement of the Proposition, that holds with probability greater than or equal to $\prod_{j \in j} (1 - \beta_{ij})$. We then have:
\begin{eqnarray*}
\exp \left(
        - t \hspace{0.05cm} b_i
     \right)
\hspace{0.1cm}
\prod_{j \in J}
     E \left[
        \exp \left(
                t \hspace{0.05cm} a_{ij} \hspace{0.05cm} x_{j}^{*}
            \right)
     \right]
&\leq&
\exp \left(
        - t \hspace{0.05cm} b_i
     \right)
\prod_{j \in J}
B_{ij}[t,x^{*}]
=
\\
&=&
\exp \left(
- t \hspace{0.05cm} b_i
\hspace{0.1cm}
+ \sum_{j \in J} \hspace{0.05cm} \ln \hspace{0.05cm} B_{ij}[t,x^{*}]
\right)
\; ,
\end{eqnarray*}
ending the proof.
\qed
\end{proof}

\section{Conclusions and Future Work}  \label{sec:end}

In this work, we presented new theoretical results abound multi-band uncertainty in Robust Optimization. Surprisingly, this natural refinement of the classical single band model by Bertsimas and Sim has attracted very little attention and we have thus worked
on filling the existent knowledge gap. Our ongoing research is currently focused on refining the cutting plane method and intensifying the computational experiments to other relevant real-world problems, considering realistic instances defined in collaboration with our industrial partners.

\end{document}